\numberwithin{equation}{section}
\numberwithin{figure}{section}
\newtheorem{thm}[theorem]{Theorem}
\newtheorem{lem}[theorem]{Lemma}
\newtheorem{prop}[theorem]{Proposition}
\newtheorem{cor}[theorem]{Corollary}
\newtheorem{rem}[theorem]{Remark}
\definecolor{red}{rgb}{1,0,0} 
\definecolor{green}{rgb}{0,1,0} 
\definecolor{blue}{rgb}{0,0,1} 
\definecolor{darkblue}{rgb}{0,0,0.6}
\definecolor{darkred}{rgb}{0.6,0,0}
\newcommand{\eps}{\varepsilon}  
\newcommand{\R}{\mathbb{R}}
\newcommand{\bS}{\mathbb{S}}
\newcommand{\me}{\mathrm{e}}
\newcommand{\Pb}{\mathbb{P}}
\newcommand{\E}{\mathbb{E}}
\newcommand{\F}{\Phi}
\newcommand{\tr}{\mathsf{T}}
\newcommand{\vA}{\varLambda}
\newcommand{\vN}{N}
\newcommand{\vpi}{\Pi}
\title{A Q-learning algorithm for discrete-time linear-quadratic control
with random parameters of unknown distribution: convergence and stabilization}
\author{Kai~Du\thanks{Shanghai Center for Mathematical Sciences, Fudan University, 
Shanghai 200438, China (email: {\tt kdu@fudan.edu.cn}). K.~Du was partially supported 
by National Key R{\&}D Program of China (No.~2018YFA0703900), 
by National Natural Science Foundation of China (No.~11801084), 
by Shanghai Program for Professor of Special Appointment (Eastern Scholar) 
at Shanghai Institutions of Higher Learning, 
and by Natural Science Foundation of Shanghai (No.~20ZR1403600).}
\and Qingxin~Meng\thanks{Department of Mathematics, Huzhou University, Zhejiang 313000, China 
(email: {\tt mqx@zjhu.edu.cn}).
Q.~ Meng was partially supported 
by the National Natural Science Foundation of China (No.~11871121), 
and by the Natural Science Foundation of Zhejiang Province (No.~LY21A010001).}
\and Fu~Zhang\thanks{Corresponding author. College of Science, University of Shanghai for Science and Technology,
Shanghai 200093, China (email: {\tt fuzhang82@gmail.com}).
F.~Zhang was partially supported 
by the National Natural Science Foundation of China (No. 11701369, 12071292).}}
\begin{document}

\maketitle
\begin{abstract}
This paper studies an infinite horizon optimal control problem for
discrete-time linear systems and quadratic criteria, both with random
parameters which are independent and identically distributed with
respect to time. A classical approach is to solve an algebraic Riccati
equation that involves mathematical expectations and requires certain
statistical information of the parameters. In this paper, we propose
an online iterative algorithm in the spirit of Q-learning for the
situation where only one random sample of parameters emerges at each time step. 
The first theorem proves the equivalence
of three properties: the convergence of the learning sequence, the
well-posedness of the control problem, and the solvability of the
algebraic Riccati equation. The second theorem shows that the adaptive
feedback control in terms of the learning sequence stabilizes the
system as long as the control problem is well-posed. Numerical examples
are presented to illustrate our results.
\end{abstract}
\keyphrases{linear quadratic optimal control, random parameters, Q-learning, convergence, stabilization.}
\AMclass{49N10, 93E35, 93D15.}

\section{Introduction and main results}

This paper aims to propose an online algorithm to solve the infinite
horizon linear quadratic (LQ) control problem for discrete-time systems
with random parameters. Given an initial state $x_{0}=x\in\R^{n}$,
the system evolves as
\begin{equation}
x_{t+1}=\vA_{t+1}\begin{bmatrix}x_{t}\\
u_{t}
\end{bmatrix},\quad t=0,1,2,\dots,\label{eq:system}
\end{equation}
where $x_{t}\in\R^{n}$ denotes the state and $u_{t}\in\R^{m}$ the
control at time $t$; the cost function is defined as
\begin{equation}
J(x,u_{\cdot})=\sum_{t=0}^{\infty}\big[x_{t}^{\tr},u_{t}^{\tr}\big]\vN_{t+1}\begin{bmatrix}x_{t}\\
u_{t}
\end{bmatrix}.\label{eq:cost}
\end{equation}
The random parameters $\vA_{t+1}$ and $N_{t+1}$ that affect the
system from $t$ to $t+1$ are not exposed until time $t+1$. The
objective of this control problem is to minimize the expected value
of the cost function among all admissible controls; the selection
of such a control $u_{t}$ is only based on the information of parameters
up to time $t$. In this paper, we assume that $N_{t}$ is positive
semidefinite, and the random matrices $[\vA_{t}^{\tr},\vN_{t}]$ with
$t=1,2,\dots$ are independent and identically distributed, but \emph{their
statistical information is previously unknown}. In what follows, $[\vA^{\tr},\vN]$
denotes an independent copy of $[\vA_{1}^{\tr},\vN_{1}]$.

The study of optimal control of discrete-time linear systems with
independent random parameters can date back to Kalman~\cite{kalman1961control}
in 1961, motivated by random sampling systems~\cite{Kalman1959405}.
Unsurprisingly, such models arise also in many other situations, for
instance, control systems that involve state and control-dependent
noise~\cite{drenick1964optimal,martin1975stability,Harris1978213},
digital control of diffusion processes~\cite{Tiedemann1984449}, and
macroeconomic systems~\cite{chow1975analysis,aoki1976optimal} where
the randomness of parameters of econometric models is taken into account.

Due to the wide application background, the LQ problem with random
parameters has been extensively studied (see~\cite{kalman1961control,drenick1964optimal,alspach1973dual,Athans1977491,Ku1977866,DeKoning1982443,Morozan198389,Yaz1988407,Pronzato1996855,Beghi19981031,huang2006infinite,Wang2016379}
for example). As far as the infinite horizon problem is concerned,
the key issues addressed mostly in the literature include:
\begin{itemize}[leftmargin=6ex]
\item to determine whether the LQ problem
is \emph{well-posed}, i.e., the value function
\begin{equation}
V(x):=\inf_{u_{\cdot}}\E[J(x,u_{\cdot})]
~~~~~~~~~~~~\label{eq:value}
\end{equation}
is finite for all $x\in\R^{n}$; and if it is well-posed,
\item to construct an optimal control $u_{\cdot}^{\star}$ for each $x$
such that $\E[J(x,u_{\cdot}^{\star})]=V(x)$.
\end{itemize}
Moreover, it is known that the well-posedness issue has an intimate
link to stabilizability of the system (\ref{eq:system}) which in
itself is an important topic and has also been widely discussed (see
\cite{martin1975stability,Ku1977866,DeKoning1982443,Morozan198389,yaz1985stabilization}
for example). 

For the above issues, a commonly used approach in the literature is
to apply stochastic dynamic programming~\cite{aoki1967optimal} to
the LQ problem, resulting in an algebraic Riccati equation (ARE) that
characterizes the value function and the optimal (feedback) control.
In this sense, the problem can be perfectly solved if the distribution
of the random parameters are known. To capture more mathematical insights,
let us quickly look at the informal derivation. The value function, if it is
finite, is believed to be a quadratic form, say, $V(x)=x^{\tr}Kx$
with some positive semidefinite matrix $K$, then Bellman's principle
of optimality gives that
\begin{equation}
\begin{aligned}x^{\tr}Kx & =\inf_{u_{t}}\E\bigg\{\big[x_{t}^{\tr},u_{t}^{\tr}\big]\vN_{t+1}\begin{bmatrix}x_{t}\\
u_{t}
\end{bmatrix}+x_{t+1}^{\tr}Kx_{t+1}\,\bigg|\,x_{t}=x\bigg\}\\
 & =\inf_{u_{t}}\E\bigg\{\big[x_{t}^{\tr},u_{t}^{\tr}\big](\vN_{t+1}+\vA_{t+1}^{\tr}K\vA_{t+1})\begin{bmatrix}x_{t}\\
u_{t}
\end{bmatrix}\bigg|\,x_{t}=x\bigg\}\\
 & =\min_{u\in\R^{m}}\big[x^{\tr},u^{\tr}\big]\big(\E[\vN_{t+1}+\vA_{t+1}^{\tr}K\vA_{t+1}]\big)\begin{bmatrix}x\\
u
\end{bmatrix}\quad\forall\,x\in\R^{n};
\end{aligned}
\label{eq:xKx}
\end{equation}
the last unconditional extremum can be solved out explicitly. To proceed
further, let us introduce some notations used frequently in this paper.
Let $\bS_{+}^{d}$ be the set of positive semidefinite $d\times d$-matrices
with $d=n+m$, and for any $P\in\bS_{+}^{d}$ we refer to its certain
submatrices according to the following partition:
\begin{equation}
P=\begin{bmatrix}P_{xx} & P_{xu}\\
P_{ux} & P_{uu}
\end{bmatrix}\quad\text{with }\ P_{xx}\in\R^{n\times n},\label{eq:Q-decomp-1}
\end{equation}
and define the following two mappings:
\begin{equation}
\begin{aligned}\vpi(P) & :=P_{xx}-P_{xu}P_{uu}^{+}P_{ux},\\
\Gamma(P) & :=-P_{uu}^{+}P_{ux},
\end{aligned}
\label{eq:pi}
\end{equation}
where $P_{uu}^{+}$ denotes the Moore--Penrose pseudoinverse of $P_{uu}$.
Using the above notations, one can easily obtain from (\ref{eq:xKx})
that
\begin{equation}
K=\vpi(\E[\vN+\vA^{\tr}K\vA]),\label{eq:riccati}
\end{equation}
which is exactly the algebraic Riccati equation (ARE) for LQ problem
(\ref{eq:system})--(\ref{eq:cost}). Moreover, the infimum in (\ref{eq:xKx})
can be achieved by taking 
\[
u_{t}=\Gamma(\E[\vN+\vA^{\tr}K\vA])x_{t},
\]
which gives the optimal feedback control of the LQ problem. In principle,
to compute the expectation in (\ref{eq:riccati}) one need know certain
statistical information of the parameters $N_{t}$ and $\vA_t$.

The above argument can be rigorized without much effort; actually,
under rather general settings, LQ problem (\ref{eq:system})--(\ref{eq:cost})
is well-posed if and only if ARE (\ref{eq:riccati}) has a solution
(see~\cite{DeKoning1982443,Morozan198389} or Theorem~\ref{thm:main}
below). Utilizing this relation, some useful criteria for well-posedness
of infinite horizon LQ problems are obtained in the papers~\cite{drenick1964optimal,Athans1977491,Ku1977866,DeKoning1982443}
under various circumstances where the mathematical expectation in
ARE (\ref{eq:riccati}) can be evaluated accurately.

A natural question is, how to solve LQ problem (\ref{eq:system})--(\ref{eq:cost})
when the statistical information of the parameters is inadequate and
ARE (\ref{eq:riccati}) fails to work. 

In this paper we propose a Q-learning algorithm to tackle this question.
Q-learning is a value-based reinforcement learning algorithm which
is used to find the optimal control policy using a state-control value
function, called the Q-function or Q-factor, instead of the usual
value function in dynamic programming; see~\cite{bertsekas2019reinforcement,sutton2018reinforcement}.
The original and most widely known Q-learning algorithm of Watkins
\cite{watkins1989learning} is a stochastic version of value iteration,
applying to Markov decision problems with unknown costs and transition
probabilities. The starting point is to reformulate Bellman's equation
into an equivalent form that is particularly convenient for deriving
learning algorithm. Let us briefly illustrate how this works in our
case. Defining
\[
Q^{*}=\E[\vN+\vA^{\tr}K\vA]\in\R^{d\times d},
\]
ARE (\ref{eq:riccati}) are equivalent to the following equation:
\begin{equation}
Q^{*}=\E[N+\vA^{\tr}\vpi(Q^{*})\vA].\label{eq:q-eq}
\end{equation}
The mathematical convenience of this reformulation derives from the
fact that the nonlinear operator $\vpi(\cdot)$ appears inside the
expectation in (\ref{eq:q-eq}), whereas it appears outside the expectation
in ARE (\ref{eq:riccati}). This fact plays an important role in the
feasibility and convergence of Q-learning algorithms.

\begin{algorithm}[!tb] 
\caption{Q-learning for LQ problem with random parameters} 
\label{alg:Q-learning} 
\begin{algorithmic}[1]
\STATE Set the initial matrix \(Q_0\). 
\WHILE{not converged}     
\STATE \(Q_{t+1} \gets Q_{t}+\alpha_{t}(\vN_{t+1}+\vA_{t+1}^{\tr}\vpi(Q_{t})\vA_{t+1}-Q_{t})\). 
\ENDWHILE 
\end{algorithmic} 
\end{algorithm}

Algorithm~\ref{alg:Q-learning} presented above is our Q-learning
algorithm for LQ problem (\ref{eq:system})--(\ref{eq:cost}), where
the learning rate sequence $(\alpha_{t}\in[0,1]:t=0,1,\dots)$ satisfies
\begin{equation}
\sum_{t=0}^\infty \alpha_{t}=\infty\quad\text{and}\quad
\sum_{t=0}^\infty \alpha_{t}^{2}<\infty,\label{eq:step-size-1}
\end{equation}
which can be random as long as $\alpha_{t}$ is measurable to $\sigma\{\vA_{s},\vN_{s}:s=1,\dots,t\}$.
The objective of this algorithm is to learn the matrix $Q^{*}$ (if
it exists) that solves (\ref{eq:q-eq}), based on the observed samples
of the parameters.

Algorithm~\ref{alg:Q-learning} is an online learning algorithm. As
pointed out by Tsitsiklis~\cite{tsitsiklis1994asynchronous}, the
Q-learning algorithm is recursive and each new piece of information
of the parameters is immediately used for computing an additive correction
term to the old estimates. The iteration in Algorithm~\ref{alg:Q-learning},
i.e.,
\begin{equation}
Q_{t+1}=Q_{t}+\alpha_{t}(\vN_{t+1}+\vA_{t+1}^{\tr}\vpi(Q_{t})\vA_{t+1}-Q_{t})\label{eq:q-learning}
\end{equation}
can be regarded as a stochastic version of the standard fixed-point
iteration based on the equation (\ref{eq:q-eq}).

The first theorem of this paper draws a full picture of the relationship
among the LQ problem, ARE, and the Q-learning algorithm. 
\begin{thm}
\label{thm:main}Let $(Q_{t})_{t\ge0}$ be the sequence constructed
in Algorithm~\ref{alg:Q-learning}. In addition to the above setting,
we assume that $\E[\|N\|_{2}^2+\|\vA^{\tr}\vA\|_{2}^2]$
is finite and $\E[\vN]$ is positive definite. 

Then, the following
statements are equivalent:
\begin{enumerate}
\item[\emph{a)}] LQ problem (\ref{eq:system})--(\ref{eq:cost}) is well-posed;
\item[\emph{b)}] ARE (\ref{eq:riccati}) admits a solution $K$;
\item[\emph{c)}] $(Q_{t})_{t\ge0}$ is bounded with a positive probability;
\item[\emph{d)}] $(Q_{t})_{t\ge0}$ converges almost surely (a.s.) to a deterministic
matrix $Q^{\star}\in\bS_{+}^{d}$.
\end{enumerate}
Moreover, if either statement is valid, one has the following properties:
\begin{enumerate}
\item[\emph{1)}] the value function $V(x)=x^{\tr}Kx$ for all $x\in\R^{n}$;
\item[\emph{2)}] the solution of ARE (\ref{eq:riccati}) is unique and given by $K=\vpi(Q^{\star})$;
\item[\emph{3)}] the optimal  control is given by  feedback form $u_{t}^{\star}=\Gamma(Q^{\star})x_{t}$;
\item[\emph{4)}] $Q^{\star}$ satisfies (\ref{eq:q-eq}), i.e., $Q^{\star}=\E[\vN+\vA^{\tr}\vpi(Q^{\star})\vA]$,
\end{enumerate}
where the mappings $\vpi(\cdot)$ and $\Gamma(\cdot)$ are defined
in (\ref{eq:pi}).
\end{thm}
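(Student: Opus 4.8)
The plan is to separate the equivalences into a deterministic block, a)$\Leftrightarrow$b) together with properties 1)--3), and a stochastic-approximation block linking these to c)--d) and property 4). Throughout I would exploit the partial-minimization reading of the Schur complement, $x^{\tr}\vpi(P)x=\min_{u\in\R^{m}}[x^{\tr},u^{\tr}]P[x;u]$ for $P\in\bS_{+}^{d}$, which shows at once that $\vpi$ is monotone and concave in the positive semidefinite order and satisfies $0\le\vpi(P)\le P_{xx}$. Consequently the operator $\mathcal G(Q):=\E[\vN+\vA^{\tr}\vpi(Q)\vA]$ is monotone on $\bS_{+}^{d}$, and since $\mathcal G(Q)_{uu}\succeq(\E\vN)_{uu}\succ0$, its range avoids the locus where $\vpi$ is nonsmooth.

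For a)$\Leftrightarrow$b) I would run value iteration: set $K_{0}=0$ and $K_{k+1}=\vpi(\E[\vN+\vA^{\tr}K_{k}\vA])$, identify $x^{\tr}K_{k}x$ with the $k$-horizon value by backward induction, and note $(K_{k})$ is nondecreasing so that well-posedness is equivalent to $\sup_{k}K_{k}<\infty$. Monotonicity closes the loop: a solution $K$ of \eqref{eq:riccati} dominates every $K_{k}$ by induction, forcing boundedness; conversely the bounded limit solves \eqref{eq:riccati} and is the minimal solution, giving property 1). Property 2) (uniqueness) and 3) (optimal feedback) I would get from a verification argument, using that $\E[\vN]\succ0$ makes the optimal closed loop mean-square stable, so the telescoped cost identity pins any solution to the value-function matrix $K=\vpi(Q^{\star})$.

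For the stochastic block, rewrite \eqref{eq:q-learning} as $Q_{t+1}=Q_{t}+\alpha_{t}(\mathcal G(Q_{t})-Q_{t})+\alpha_{t}w_{t+1}$ with $w_{t+1}:=\vN_{t+1}+\vA_{t+1}^{\tr}\vpi(Q_{t})\vA_{t+1}-\mathcal G(Q_{t})$. As $Q_{t}$ is $\mathcal F_{t}:=\sigma\{\vA_{s},\vN_{s}:s\le t\}$-measurable and $[\vA_{t+1}^{\tr},\vN_{t+1}]$ is independent of $\mathcal F_{t}$, $(w_{t+1})$ is a martingale difference, and $\|\vpi(Q)\|\le\|Q\|$ together with the assumed finiteness of $\E[\|\vN\|_{2}^{2}+\|\vA^{\tr}\vA\|_{2}^{2}]$ yields $\E[\|w_{t+1}\|^{2}\mid\mathcal F_{t}]\le C(1+\|Q_{t}\|^{2})$. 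This is the Robbins--Monro form with mean field $h(Q)=\mathcal G(Q)-Q$ and limiting ODE $\dot Q=\mathcal G(Q)-Q$, whose equilibria are exactly the solutions of \eqref{eq:q-eq}, equivalently (via $K=\vpi(Q^{\star})$) of the ARE. Given b), the fixed point $Q^{\star}$ exists; I would establish its global asymptotic stability by combining monotonicity of the flow with a local contraction at $Q^{\star}$ governed by the closed-loop operator $\E[\tilde{\vA}^{\tr}(\cdot)\tilde{\vA}]$, $\tilde{\vA}=\vA[I;\Gamma(Q^{\star})]$, whose spectral radius is below one precisely by stabilizability. With a.s.\ boundedness of the iterates, obtained from $Q^{\star}$ by a monotone Lyapunov/Borkar--Meyn estimate, the ODE method gives $Q_{t}\to Q^{\star}$ a.s.; uniqueness makes the limit deterministic, proving b)$\Rightarrow$d) and property 4). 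The implication d)$\Rightarrow$c) is immediate.

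The delicate step is c)$\Rightarrow$b), which I would argue contrapositively: if \eqref{eq:riccati} has no solution then $(Q_{t})$ is unbounded a.s., so it is never bounded with positive probability. When b) fails the value iteration diverges, $K_{k}\uparrow\infty$, and the ODE $\dot Q=\mathcal G(Q)-Q$ has no equilibrium; the task is to show its only behaviour on bounded sets is escape, so that the ODE characterization of limit sets forbids a positive-probability bounded event. This is the main obstacle: standard Q-learning proofs lean on $\mathcal G$ being a contraction, which fails here without well-posedness, so monotonicity and concavity of $\vpi$ must replace it everywhere---most pressingly in securing a.s.\ boundedness, global (not merely local) stability of the mean ODE, and the absence of bounded invariant sets in the non-well-posed regime, all while controlling $\vpi$ near the rank-deficient $uu$-blocks it may encounter.
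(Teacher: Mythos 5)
Your deterministic block (a)$\Leftrightarrow$(b) with properties 1)--3) follows essentially the same route as the paper: value iteration $K_{k+1}=\vpi(\E[\vN+\vA^{\tr}K_{k}\vA])$, identification with the finite-horizon value, monotonicity to get the minimal solution, and a verification argument for the feedback law. That part is sound. For the stochastic block, however, you have replaced the paper's comparison argument by the ODE method, and this is where a genuine gap appears, concentrated exactly at the step you yourself flag: (c)$\Rightarrow$(b).

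Your contrapositive plan requires two things you do not establish: first, that when the ARE has no solution the mean ODE $\dot Q=\mathcal G(Q)-Q$ has no bounded invariant sets (monotonicity of $\mathcal G$ alone does not obviously exclude bounded non-equilibrium orbits, and no Lyapunov function is exhibited); second, and more seriously, that the ODE characterization of limit sets applies on the event where the iterates stay bounded. The Kushner--Clark/Borkar--Meyn machinery takes a.s.\ boundedness as a hypothesis; hypothesis (c) only gives boundedness with \emph{positive} probability, and converting ``bounded with positive probability'' into ``the ODE's asymptotics govern the trajectory on that event'' is precisely the content that needs proof, not a citation. The paper avoids this entirely by a direct argument: it builds a \emph{deterministic} increasing sequence $L^{\eps}_{k+1}=\F(L^{\eps}_{k})-\eps I_{d}$ and shows, via a martingale decomposition of the iteration restricted to the stopping times $\tau_k=\inf\{t:\|Q_t\|_2\ge k\}$, that $L^{\eps}_{k}\le Q_{t}(\omega)$ eventually on the bounded event. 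Because $L^{\eps}_{k}$ is deterministic, positive probability of boundedness already forces $L^{\eps}_{k}\uparrow Q^{\eps}$ with $Q^{\eps}=\F(Q^{\eps})-\eps I_{d}$, and letting $\eps\downarrow 0$ produces a fixed point of $\F$, hence an ARE solution. You would need either to reproduce something of this kind or to genuinely prove the escape property of the ODE; as written, the implication (c)$\Rightarrow$(b) is asserted rather than proved.

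A secondary, repairable weakness: for (b)$\Rightarrow$(c)--(d) you invoke ``global asymptotic stability by combining monotonicity with a local contraction at $Q^{\star}$.'' The paper's mechanism here is not local: after normalizing so that $Q^{*}=I_{d}$, it dominates $\F_{t}$ by the \emph{affine} map $\Psi_{t}(Q)=\tilde{\vN}_{t}+\tilde{\vA}_{t}^{\tr}Q_{xx}\tilde{\vA}_{t}$ (using $\vpi(Q)\le Q_{xx}$), whose mean is a genuine global contraction because $\E[\tilde{\vA}^{\tr}\tilde{\vA}]=I_{d}-\E[\tilde{\vN}]\le\lambda I_{d}$ with $\lambda<1$; the resulting majorant process $P_{t}\ge\tilde{Q}_{t}$ converges to $I_{d}$ and gives both boundedness and the upper half of the squeeze. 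If you retain the ODE framing you should make this affine majorization explicit, since it is what substitutes for the contraction hypothesis that standard Q-learning proofs rely on.
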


In the theorem, we use $\|\cdot\|_2$ to denote the $2$-norm of Matrix. It is worth noting that statement (c) in this theorem looks relatively
weak, but it still implies the convergence of $Q_{t}$ and the well-posedness
of the LQ problem.  
Form above theorem, the sequence $Q_{t}$ is either convergent or divergent with probability 1. So we have
\begin{cor}
The probability that $(Q_{t})_{t\ge0}$ is bounded is either $0$
or $1$.
\end{cor}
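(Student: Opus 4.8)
The plan is to read the corollary as a zero--one law and to extract it directly from the equivalence of statements (c) and (d) in Theorem~\ref{thm:main}, which already carries all the analytic content. First I would fix the sample-path event
\begin{equation}
A:=\big\{(Q_{t})_{t\ge0}\ \text{is bounded}\big\},
\label{eq:cor-event}
\end{equation}
and observe that the assertion to be proved is exactly $\Pb(A)\in\{0,1\}$. The only point worth spelling out is the translation of the phrasing in statement (c): ``$(Q_{t})_{t\ge0}$ is bounded with a positive probability'' means precisely $\Pb(A)>0$.

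I would then argue by dichotomy on the value of $\Pb(A)$. If $\Pb(A)=0$ there is nothing further to show. If instead $\Pb(A)>0$, then statement (c) of Theorem~\ref{thm:main} holds, and the equivalence $(\mathrm{c})\Leftrightarrow(\mathrm{d})$ immediately gives statement (d), namely that $(Q_{t})_{t\ge0}$ converges almost surely to a deterministic matrix $Q^{\star}\in\bS_{+}^{d}$. Since every convergent sequence in $\R^{d\times d}$ is bounded, almost sure convergence forces $\Pb(A)=1$. Combining the two cases yields $\Pb(A)\in\{0,1\}$, which is the claim.

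There is essentially no obstacle in this argument, precisely because the nontrivial step --- upgrading mere positive-probability boundedness to almost sure convergence (and hence almost sure boundedness) --- has already been absorbed into the equivalence $(\mathrm{c})\Leftrightarrow(\mathrm{d})$ of Theorem~\ref{thm:main}. The corollary is therefore best viewed as recording, in the language of probability, the self-improving nature of that equivalence: the convergence behavior of the learning sequence admits no intermediate regime.
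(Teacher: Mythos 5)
Your argument is correct and is exactly the paper's intended reasoning: the corollary is stated as an immediate consequence of the equivalence $(\mathrm{c})\Leftrightarrow(\mathrm{d})$ in Theorem~\ref{thm:main}, and your dichotomy on $\Pb(A)$ makes that deduction explicit. Nothing further is needed.
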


This zero-one law endows the algorithm with great applicability. Indeed,
this proves, at least theoretically, that just running or stimulating
the system for one sample trajectory, we can almost certainly detect
whether the LQ problem is well-posed or not, and also obtain the desired
matrix $Q^{\star}$ if it exists. 

The next theorem concerns the stabilization of system (\ref{eq:system}).
We say system (\ref{eq:system}) is \emph{stabilizable} a.s. if under
some control the state vanishes a.s. as time tends to infinity. In
terms of the sequence $Q_{t}$ from Algorithm~\ref{alg:Q-learning},
it is natural to construct an adaptive feedback control
\begin{equation}
u_{t}^{\mathrm{a}}=\Gamma(Q_{t})x_{t}.\label{eq:fb-contr}
\end{equation}
It will be shown that this control stabilizes the
system a.s. as long as the LQ problem is well-posed.
\begin{thm}
\label{thm:stab}Under the same setting of Theorem~\ref{thm:main},
if LQ problem (\ref{eq:system})--(\ref{eq:cost}) is well-posed,
then the state $x_{t}^{{\rm a}}$ under the control $u_{t}^{\mathrm{a}}$
satisfies
\begin{equation}
J(x,u_{\cdot}^{\mathrm{a}})<\infty\quad\text{and}\quad\sum_{t=0}^{\infty}\big(|x_{t}^{\mathrm{a}}|^{2}+|u_{t}^{\mathrm{a}}|^{2}\big)<\infty\label{eq:2.7-2}
\end{equation}
a.s. for any initial state $x\in\R^{n}$; consequently, the adaptive
feedback control $u_{\cdot}^{\mathrm{a}}$ given by (\ref{eq:fb-contr})
stabilizes system (\ref{eq:system}) a.s.
\end{thm}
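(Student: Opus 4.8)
The plan is to use the (possibly degenerate) quadratic form $W_t:=(x_t^{\mathrm a})^{\tr}K\,x_t^{\mathrm a}$ with $K=\vpi(Q^{\star})$ as a Lyapunov function, and to extract a one-step decay driven by the positive definiteness of $\E[\vN]$. From Theorem~\ref{thm:main} I may assume $Q_t\to Q^{\star}$ a.s., $K=\vpi(Q^{\star})\in\bS_+^d$, and $Q^{\star}=\E[\vN]+\E[\vA^{\tr}K\vA]$. Writing $G(P):=\begin{bmatrix}I\\ \Gamma(P)\end{bmatrix}$ and $G_t:=G(Q_t)$, so that the stacked vector $\big[(x_t^{\mathrm a})^{\tr},(u_t^{\mathrm a})^{\tr}\big]^{\tr}=G_t\,x_t^{\mathrm a}$ and $x_{t+1}^{\mathrm a}=\vA_{t+1}G_t\,x_t^{\mathrm a}$, I would first record the algebraic identity $G(P)^{\tr}P\,G(P)=\vpi(P)$ for every $P\in\bS_+^d$, which is a direct computation from the definitions in (\ref{eq:pi}).

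Second, with $\mathcal F_t:=\sigma\{\vA_s,\vN_s:s\le t\}$, I would compute the conditional increment. Since $G_t$ and $x_t^{\mathrm a}$ are $\mathcal F_t$-measurable while $\vA_{t+1}$ is independent of $\mathcal F_t$,
\[
\E[W_{t+1}\mid\mathcal F_t]=(x_t^{\mathrm a})^{\tr}G_t^{\tr}\,\E[\vA^{\tr}K\vA]\,G_t\,x_t^{\mathrm a}=(x_t^{\mathrm a})^{\tr}G_t^{\tr}\big(Q^{\star}-\E[\vN]\big)G_t\,x_t^{\mathrm a}.
\]
Splitting $G_t^{\tr}Q^{\star}G_t=K+\delta_t$ with $\delta_t:=G_t^{\tr}Q^{\star}G_t-G(Q^{\star})^{\tr}Q^{\star}G(Q^{\star})$ (using the identity at $Q^{\star}$), we get $\delta_t\to0$ a.s. This convergence hinges on $\Gamma(Q_t)\to\Gamma(Q^{\star})$, which I would justify by noting $(Q^{\star})_{uu}\succeq(\E[\vN])_{uu}\succ0$ is invertible, so $(Q_t)_{uu}$ is eventually invertible and the pseudoinverse is continuous at $Q^{\star}$. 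For the dissipation, $G_t^{\tr}\E[\vN]G_t\succeq\mu\,G_t^{\tr}G_t\succeq\mu I$ with $\mu:=\lambda_{\min}(\E[\vN])>0$, because the top block of $G_t$ is $I$. Combining,
\[
\E[W_{t+1}\mid\mathcal F_t]\le W_t+\|\delta_t\|_2\,|x_t^{\mathrm a}|^2-\mu\,|x_t^{\mathrm a}|^2,
\]
an almost-supermartingale inequality with a vanishing $\mathcal F_t$-measurable perturbation and a full-state dissipation term.

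The hard part will be converting this into $\sum_t|x_t^{\mathrm a}|^2<\infty$ a.s.: the perturbation $\|\delta_t\|_2$ tends to $0$ but need not be summable, so neither the plain supermartingale convergence theorem nor the Robbins--Siegmund theorem (which would require $\sum_t\|\delta_t\|_2<\infty$) applies directly, and the random time after which $\|\delta_t\|_2\le\mu/2$ is not a stopping time. I would handle this by localization: for each fixed $k$ set $\sigma_k:=\inf\{t\ge k:\|\delta_t\|_2>\mu/2\}$, a genuine stopping time. For $k\le t<\sigma_k$ one has $\E[W_{t+1}\mid\mathcal F_t]\le W_t-\tfrac{\mu}{2}|x_t^{\mathrm a}|^2$, so $(W_{t\wedge\sigma_k})_{t\ge k}$ is a nonnegative supermartingale; conditioning on $\mathcal F_k$ (where $W_k$ is a.s.\ finite) and telescoping yields $\E[\sum_{t\ge k}\tfrac{\mu}{2}|x_t^{\mathrm a}|^2\mid\mathcal F_k]\le W_k$ on $\{t<\sigma_k\}$, hence finiteness of that sum a.s. Since $\|\delta_t\|_2\to0$ a.s., the events $\{\sigma_k=\infty\}$ increase to a set of full probability, and on each of them the inequality holds for all $t\ge k$, giving $\sum_{t\ge k}|x_t^{\mathrm a}|^2<\infty$; letting $k\to\infty$ produces $\sum_t|x_t^{\mathrm a}|^2<\infty$ a.s. (and $W_t$ converges a.s.).

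Finally I would assemble the two stated conclusions. Because $\Gamma(Q_t)\to\Gamma(Q^{\star})$, the gains $\|\Gamma(Q_t)\|_2$ are a.s.\ bounded, so $|u_t^{\mathrm a}|^2\le\|\Gamma(Q_t)\|_2^2\,|x_t^{\mathrm a}|^2$ gives $\sum_t\big(|x_t^{\mathrm a}|^2+|u_t^{\mathrm a}|^2\big)<\infty$ a.s., which in particular forces $x_t^{\mathrm a}\to0$, i.e.\ almost sure stabilization. For the cost, the running term is $(G_t x_t^{\mathrm a})^{\tr}\vN_{t+1}(G_t x_t^{\mathrm a})\le\|\vN_{t+1}\|_2\,a_t$ with $a_t:=(1+\|\Gamma(Q_t)\|_2^2)\,|x_t^{\mathrm a}|^2$, an $\mathcal F_t$-measurable summable sequence. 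Since $\vN_{t+1}$ is independent of $\mathcal F_t$ and $\E\|\vN\|_2<\infty$ (from the moment assumption), $\sum_t\E[\|\vN_{t+1}\|_2\,a_t\mid\mathcal F_t]=\E\|\vN\|_2\sum_t a_t<\infty$ a.s.; a standard conditional convergence lemma for nonnegative adapted summands (Lévy's extension of Borel--Cantelli) then yields $\sum_t\|\vN_{t+1}\|_2\,a_t<\infty$ a.s., whence $J(x,u_\cdot^{\mathrm a})<\infty$ a.s. This establishes (\ref{eq:2.7-2}) and the claimed a.s.\ stabilization.
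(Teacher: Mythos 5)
Your argument is correct, and it takes a genuinely different route from the paper's. The paper first applies the congruence transformation of Subsection~3.3 to reduce to the normalized case $K=I_{n}$, $Q^{\star}=I_{d}$, $\Gamma(Q^{\star})=O$; it then works with $|x_{t}^{\mathrm{a}}|^{2}$ directly, showing that once $\|Q_{t}-I_{d}\|_{2}<\delta$ the closed-loop matrix satisfies $\|\E[\bar{A}_{t+1}^{\tr}\bar{A}_{t+1}\,|\,\mathcal{F}_{t}]\|_{2}<\gamma<1$, and it derives geometric decay of truncated second moments, $\E[\mathbf{1}_{\Xi_{\tau}\backslash\Theta_{\tau}^{t}}|x_{t}^{\mathrm{a}}|^{2}]\le\gamma^{t-\tau}c_{\eps}$, handling both the possible non-integrability of $|x_{t}^{\mathrm{a}}|^{2}$ and the fact that the time after which $Q_{t}$ stays near $Q^{\star}$ is not a stopping time by means of the truncation sets $\Xi_{\tau}$ and $\Theta_{\tau}^{\infty}$, concluding on events $\Omega_{\eps}$ of probability exceeding $1-\eps$. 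You instead stay in the original coordinates and use the value function $W_{t}=(x_{t}^{\mathrm{a}})^{\tr}Kx_{t}^{\mathrm{a}}$ as a Lyapunov supermartingale, extracting the dissipation $\mu|x_{t}^{\mathrm{a}}|^{2}$ from $\E[\vN]\ge\mu I_{d}$ through the identity $G(P)^{\tr}PG(P)=\vpi(P)$ and the fixed-point relation $Q^{\star}=\E[\vN]+\E[\vA^{\tr}K\vA]$ (property~4 of Theorem~\ref{thm:main}), and you handle the same two difficulties with genuine stopping times $\sigma_{k}$ together with conditioning on $\mathcal{F}_{k}$. Your localization is sound: the telescoping works for nonnegative, possibly non-integrable variables via the tower property, though strictly speaking $(W_{t\wedge\sigma_{k}})_{t\ge k}$ is a supermartingale only conditionally on $\mathcal{F}_{k}$, as you in effect acknowledge; and your continuity claim for $\Gamma$ at $Q^{\star}$ is justified since $Q^{\star}\ge\E[\vN]$ forces $(Q^{\star})_{uu}>O$ (the paper needs the same continuity, secured after (\ref{eq:K-posit})). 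What each approach buys: yours makes transparent that the positive definiteness of $\E[\vN]$ is the sole source of dissipation and avoids the change of variables entirely, producing the summability of states, controls and running costs from a single supermartingale estimate; the paper's yields an explicit geometric decay rate $\gamma$ for the truncated second moments, which is slightly more quantitative. The final step for $J(x,u_{\cdot}^{\mathrm{a}})<\infty$ via the conditional Borel--Cantelli lemma (using $\E\|\vN\|_{2}<\infty$, which follows from the second-moment assumption) is a legitimate alternative to the paper's direct factorization of $\E[\vN_{t+1}]$ by independence.
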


The stabilization in this result is in the path-wise sense, whereas
a relevant definition widely used in the literature is called the
mean-square stabilization, i.e., the second-order moment of the state,
$\E[\|x_{t}\|_{2}^{2}]$, tends to zero under certain control (see~\cite{DeKoning1982443,yaz1985stabilization}
from example). Either of these two definitions cannot cover each other,
while the path-wise one is relatively suitable in our setting because
our learning algorithm is carried out along one single sample path.
Nevertheless, certain modification of the adaptive feedback control
(\ref{eq:fb-contr}) may stabilize the system in these two sense simultaneously;
one possible way is to cut-off the feedback coefficient $\Gamma(Q_{t})$
when its norm is larger than some given bound. The details are left
to interested readers.

Let us give some remarks from the technical aspect. Like the original
Q-learning, our algorithm can also be embedded into a broad class
of stochastic approximation algorithms studied 
in~\cite{jaakkola1994convergence,tsitsiklis1994asynchronous},
two celebrated papers that give rigorous convergence proofs of the
original Q-learning. However, the results in those papers cannot apply
to Algorithm~\ref{alg:Q-learning} directly for two reasons: firstly,
they all require certain contraction conditions which are not satisfied
in our case, and secondly, the partial order of vectors used in~\cite{tsitsiklis1994asynchronous}
are substantially different from that of symmetric matrices, so the
monotonicity condition required there is not satisfied either. In
the proof of Theorem~\ref{thm:main} we adopt the comparison argument
from~\cite{tsitsiklis1994asynchronous}. The crucial fact we used
is the monotonicity property of $\vpi(\cdot)$, which helps us construct
upper and lower bounds for $Q_{t}$ from ARE on the one hand, and
an upper bound for the approximating sequence of ARE from $Q_{t}$
on the other hand. The equivalence of statements (a) and (b), i.e.,
the well-posdness of LQ problem and the solvability of ARE, is proved
by means of Bellman's principle of optimality; similar results can
be found in~\cite{DeKoning1982443,Morozan198389} where the control
is restricted in the feedback form.

The conditions of our results are quite general. The positive definiteness
of $\E[\vN]$ can be weakened to some extent (see Remark~\ref{rem:relax}),
which ensures the same property of the solution of ARE and the limit
$Q^{\star}$ (in this case the pseudoinverses in (\ref{eq:riccati})
and $\vpi(Q^{\star})$ are the standard matrix inverses). The finiteness
of $\E[\|N\|_{2}^2+\|\vA^{\tr}\vA\|_{2}^2]$ is a natural condition
to prove the convergence of Algorithm~\ref{alg:Q-learning} by use
of some classical results from stochastic approximation theory. In
applications, the verification of these conditions may depend on certain
qualitative properties of specific systems rather than the full statistical
information of parameters. For example, the conditions are automatically
satisfied if $\vA,\vN$ are bounded and $N$ is positive definite
a.s. Numerical examples are presented in Section 5. Nevertheless,
it would be interesting to extend the results to the indefinite case,
in which the matrix $N$ is not necessarily positive semidefinite.
The indefinite LQ problem has applications in many fields such as
robust control, mathematical finance, and so on; for more details,
we refer the reader to~\cite{ran1993linear,chen1998stochastic,rami2002indefinite,ni2015indefinite}. 

As far as LQ problems with unknown parameters are concerned, our approach
is different from the well-known adaptive control~\cite{aastrom2013adaptive}
in some respects. First, the types of randomness are different: the
noise in our model is multiplicative and has no specific structure,
whereas most of control systems studied in adaptive control are perturbed
by additive noise, for example, the linear--quadratic--Gaussian
control problem (see~\cite{chen1986optimal,duncan1999adaptive,faradonbeh2020adaptive,aastrom2013adaptive}
and references therein); as for multiplicative noise, the adaptive
control algorithms proposed in~\cite{alspach1973dual,tse1973actively},
of which the convergence are not proved, still specify the type of
noise and how it enters into the system. Second, a typical adaptive
control algorithm consists of two parts: parameter estimation (or
system identification) and control law, while in our approach we do
not pursuit to identify the system but directly learn the state-control
value function that yields the value function and control law. Nevertheless,
an adaptive control algorithm usually makes use of the inputs and
outputs of the system, but not the direct observation of the sample
of parameters as we do in this paper. Obviously, in a period $[t,t+1]$,
the information of inputs and outputs is often insufficient to determine
the exact value of the sample of parameters. Modification of our algorithm
based on inputs and outputs is expected in future work.

The rest of this paper is organized as follows. Section~2 presents some
auxiliary lemmas. The whole of Section~3 is devoted to
the proof of Theorem~\ref{thm:main}, split into five subsections.
Section~4 gives the proof of Theorem~\ref{thm:stab}. Numerical
examples and further discussion are presented in Section~5.

\section{Auxiliary lemmas}

Let us introduce some notations used in what follows. 
For a vector $x$ and a matrix $M$, $|x|$ and $\|M\|_{2}$ denote their $2$-norms, respectively. 
For two symmetric matrices $M_{1},M_{2}$
with the same size, we write $M_{1}\ge M_{2}$ (resp. $M_{1}>M_{2}$)
if $M_{1}-M_{2}$ is positive semidefinite (resp. definite); the notations
$M_{1}\le M_{2}$ and $M_{1}<M_{2}$ are similarly understood. $I_{n}$
denotes the $n\times n$ identity matrix, and $O$ the zero matrix
whose size is determined by the context.
\begin{lem}
\label{lem:Pi_monot}For any $Q_{1},Q_{2}\in\bS_{+}^{d}$, we have
\begin{gather}
\vpi(Q_{1}+Q_{2})\geq\vpi(Q_{1})+\vpi(Q_{2}).\label{eq:4.5-1}
\end{gather}
In particular, if $Q_{1}\le Q_{2}$ then $\vpi(Q_{1})\le\vpi(Q_{2})$.
\end{lem}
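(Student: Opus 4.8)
The plan is to prove the superadditivity \eqref{eq:4.5-1} through a variational characterization of the Schur complement $\vpi$, and then deduce the monotonicity as an immediate corollary. The key claim I would establish first is that for every $P\in\bS_{+}^{d}$ and every $x\in\R^{n}$,
\begin{equation*}
x^{\tr}\vpi(P)x=\min_{u\in\R^{m}}\big[x^{\tr},u^{\tr}\big]P\begin{bmatrix}x\\ u\end{bmatrix},
\end{equation*}
with the minimum attained at $u=\Gamma(P)x$. Granting this, the main inequality follows painlessly: for fixed $x$, the right-hand side evaluated at $Q_{1}+Q_{2}$ is the minimum over $u$ of the sum of the two quadratic forms associated with $Q_{1}$ and $Q_{2}$, and since the minimum of a sum dominates the sum of the minima, we obtain $x^{\tr}\vpi(Q_{1}+Q_{2})x\ge x^{\tr}\vpi(Q_{1})x+x^{\tr}\vpi(Q_{2})x$ for all $x$. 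As all three matrices are symmetric (note $P_{xu}P_{uu}^{+}P_{ux}$ is symmetric because $P_{uu}^{+}$ is), this yields \eqref{eq:4.5-1}.

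The substance of the argument lies in verifying the variational formula, and the main obstacle is handling the pseudoinverse correctly when $P_{uu}$ is singular. Writing the quadratic form as $x^{\tr}P_{xx}x+2x^{\tr}P_{xu}u+u^{\tr}P_{uu}u$ and completing the square, a candidate minimizer solves $P_{uu}u=-P_{ux}x$, which is solvable precisely when $P_{ux}x\in\mathrm{range}(P_{uu})$. I would establish this range condition from positive semidefiniteness: if $P_{uu}v=0$, then the vector $[0^{\tr},v^{\tr}]^{\tr}$ is a null direction of the form $w\mapsto w^{\tr}Pw$, hence a global minimizer, which forces $P[0^{\tr},v^{\tr}]^{\tr}=0$ and in particular $P_{xu}v=0$; thus $\ker(P_{uu})\subseteq\ker(P_{xu})$, equivalently $\mathrm{range}(P_{ux})\subseteq\mathrm{range}(P_{uu})$ since $P_{uu}$ is symmetric. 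With the range condition in hand, $u=-P_{uu}^{+}P_{ux}x=\Gamma(P)x$ is a minimizer, and substituting back---using $P_{uu}^{+}P_{uu}P_{uu}^{+}=P_{uu}^{+}$---gives the minimum value $x^{\tr}\vpi(P)x$, completing the proof of the characterization.

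Finally, the ``in particular'' assertion is a one-line consequence. First observe that the variational formula immediately shows $\vpi$ maps $\bS_{+}^{d}$ into the positive semidefinite matrices, since each minimized quadratic form is nonnegative when $P\ge O$. Then, if $Q_{1}\le Q_{2}$, we have $Q_{2}-Q_{1}\in\bS_{+}^{d}$, so applying \eqref{eq:4.5-1} to the decomposition $Q_{2}=Q_{1}+(Q_{2}-Q_{1})$ gives $\vpi(Q_{2})\ge\vpi(Q_{1})+\vpi(Q_{2}-Q_{1})\ge\vpi(Q_{1})$. I expect the only delicate point in the entire proof to be the null-direction argument securing the range condition; everything else is routine linear algebra.
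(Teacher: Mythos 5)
Your proof is correct and follows essentially the same route as the paper: both rest on the variational identity $x^{\tr}\vpi(P)x=\min_{u}[x^{\tr},u^{\tr}]P[x^{\tr},u^{\tr}]^{\tr}$, deduce superadditivity from ``min of a sum dominates the sum of the mins,'' and obtain monotonicity from $\vpi\ge O$ applied to $Q_{2}=Q_{1}+(Q_{2}-Q_{1})$. The only difference is that you carefully justify the variational identity (the range condition $\mathrm{range}(P_{ux})\subseteq\mathrm{range}(P_{uu})$ for the pseudoinverse), which the paper simply asserts; that extra care is sound and welcome.
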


\begin{proof}
Recalling the definition of $\vpi(\cdot)$ in (\ref{eq:pi}), one
has that
\begin{equation}\label{eq:Pi-sense}
\min_{v\in\R^{m}}\begin{bmatrix}\begin{array}{l}
x\\
v
\end{array}\end{bmatrix}^{\tr}Q\begin{bmatrix}\begin{array}{l}
x\\
v
\end{array}\end{bmatrix}=x^{\tr}\vpi(Q)x,\quad\forall\,Q\in\bS_{+}^{d},\,x\in\R^{d},
\end{equation}
which implies 
\[
x^{\tr}\vpi(Q_{1}+Q_{2})x\geq x^{\tr}\vpi(Q_{1})x+x^{\tr}\vpi(Q_{2})x.
\]
So (\ref{eq:4.5-1}) is proved. 

Notice that $\vpi(Q)\ge O$ for any $Q\in\bS_{+}^{d}$. If $Q_{1}\le Q_{2}$,
then 
\[
\vpi(Q_{2}) \ge \vpi(Q_{1})+\vpi(Q_{2}-Q_{1})\ge\vpi(Q_{1}).
\]
This concludes the proof.
\end{proof}

The coming result about a deterministic recursion is elementary.
\begin{lem}
\label{lem:det-iter}Let $(f_{t}:t=0,1,2,\dots)$ be a bounded real valued sequence,
and $(\beta_{t})\subset[0,1]$ satisfy $\sum_{t}\beta_{t}=\infty$.
Suppose the sequence $y_{t}$ satisfies 
\[
y_{t+1}\le(1-\beta_{t})y_{t}+\beta_{t}f_{t}.
\]
Then
\[
\limsup_{t\to\infty}y_{t}\le\limsup_{t\to\infty}f_{t}.
\]
\end{lem}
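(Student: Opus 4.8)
The plan is to prove the $\limsup$ inequality by a direct comparison argument, unrolling the recursion and showing that the influence of the initial value $y_0$ decays while the tail is controlled by the limiting behaviour of $f_t$. Let me first reduce to a convenient normalization. Set $L:=\limsup_{t\to\infty} f_t$, which is finite since $(f_t)$ is bounded. Fix an arbitrary $\eps>0$. By definition of $\limsup$, there is an index $T$ such that $f_t\le L+\eps$ for all $t\ge T$.

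\medskip
\noindent\emph{Step 1: solve the linear recursion explicitly.} Iterating the inequality $y_{t+1}\le(1-\beta_t)y_t+\beta_t f_t$ from time $T$ onward, I would write, for $t>T$,
\begin{equation}
y_{t}\le y_{T}\prod_{s=T}^{t-1}(1-\beta_s)+\sum_{s=T}^{t-1}\beta_s f_s\prod_{r=s+1}^{t-1}(1-\beta_r).\label{eq:unroll}
\end{equation}
This is the standard closed form for a first-order linear recursion, and the inequality direction is preserved because each factor $(1-\beta_s)$ lies in $[0,1]$.

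\medskip
\noindent\emph{Step 2: bound the two terms.} Using $f_s\le L+\eps$ for $s\ge T$ in the sum of \eqref{eq:unroll}, the weights $\beta_s\prod_{r=s+1}^{t-1}(1-\beta_r)$ telescope: one checks that $\sum_{s=T}^{t-1}\beta_s\prod_{r=s+1}^{t-1}(1-\beta_r)=1-\prod_{s=T}^{t-1}(1-\beta_s)$. Hence the second term is at most $(L+\eps)\bigl(1-\prod_{s=T}^{t-1}(1-\beta_s)\bigr)$. For the first term, the key fact is that $\prod_{s=T}^{t-1}(1-\beta_s)\to 0$ as $t\to\infty$: since $\sum_t\beta_t=\infty$, the elementary bound $1-\beta_s\le e^{-\beta_s}$ gives $\prod_{s=T}^{t-1}(1-\beta_s)\le\exp(-\sum_{s=T}^{t-1}\beta_s)\to 0$. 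Combining, $\limsup_{t\to\infty}y_t\le L+\eps$ (the term $y_T\cdot 0$ vanishes, and $1-\prod(1-\beta_s)\to 1$).

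\medskip
\noindent\emph{Step 3: let $\eps\downarrow 0$.} Since $\eps>0$ was arbitrary, we conclude $\limsup_{t\to\infty}y_t\le L=\limsup_{t\to\infty}f_t$, as desired.

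\medskip
The main thing to watch is the divergence of the product to zero, which is precisely where the hypothesis $\sum_t\beta_t=\infty$ enters; the inequality $1-\beta_s\le e^{-\beta_s}$ handles it cleanly without needing $\beta_s$ bounded away from $1$. A minor technical point is that $(y_t)$ is only shown to satisfy an upper bound, so one must be careful that $\limsup_t y_t$ is well-defined; but the recursion together with boundedness of $(f_t)$ and $\beta_t\in[0,1]$ forces $(y_t)$ to be bounded above, and if $y_t\to-\infty$ the inequality holds trivially, so there is no real obstacle here.
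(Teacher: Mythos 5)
Your proof is correct, and it takes a genuinely different route from the paper. You solve the linear recursion explicitly (variation of constants): $y_{t}\le y_{T}\prod_{s=T}^{t-1}(1-\beta_s)+\sum_{s=T}^{t-1}\beta_s f_s\prod_{r=s+1}^{t-1}(1-\beta_r)$, verify the telescoping identity for the weights, and kill the initial-condition term via $1-\beta_s\le e^{-\beta_s}$ together with $\sum_t\beta_t=\infty$. The paper instead builds a monotone majorant: it sets $\tilde f_t=\sup_{s\ge t}f_s$, defines an auxiliary sequence $\tilde y_t$ satisfying the recursion with equality, shows $\tilde y_t\ge y_t$ and that $\tilde y_t$ is decreasing (being a convex combination of $\tilde y_t$ and $\tilde f_t\le\tilde y_t$), and then extracts a subsequence along which $\tilde y_t-\tilde f_t\to0$ to identify the two limits. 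Your argument is the more elementary and more quantitative of the two: it exhibits the exact rate $\prod_{s=T}^{t-1}(1-\beta_s)$ at which the initial value is forgotten, and it isolates cleanly where $\sum_t\beta_t=\infty$ enters. The paper's argument avoids the closed-form solution entirely and is closer in spirit to the comparison arguments it reuses later (following Tsitsiklis), but it needs the small subsequence/compactness step that your computation bypasses. All the delicate points in your write-up check out: the inequality direction is preserved under unrolling because $1-\beta_s\ge0$, the weights are nonnegative so the bound $f_s\le L+\eps$ may be summed against them even when $L+\eps<0$, and the well-definedness caveat about $\limsup y_t$ is harmless since the $\limsup$ lives in the extended reals.
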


\begin{proof}
Let $\tilde{f}_{t}=\sup_{s\ge t}f_{s}$. Then $\tilde{f}_{t}\ge f_{t}$
and $\tilde{f}_{t}\ge\tilde{f}_{t+1}$ for all $t=0,1,2,\dots$ Also,
define a sequence $(\tilde{y}_{t})$ with $\tilde{y}_{0}=|y_{0}|+|\tilde{f}_{0}|$
and
\begin{equation}
\tilde{y}_{t+1}=(1-\beta_{t})\tilde{y}_{t}+\beta_{t}\tilde{f}_{t}.\label{eq:tilde-y}
\end{equation}
It follows from induction that
\[
\tilde{y}_{t}\ge y_{t}\quad\text{and}\quad\tilde{f}_{t}\le\tilde{y}_{t}.
\]
Moreover, since $\tilde{y}_{t+1}$ is a convex combination of $\tilde{y}_{t}$
and $\tilde{f}_{t}$, one has $\tilde{y}_{t+1}\le\tilde{y}_{t}$.
Therefore, $\tilde{y}_{t}$ and $\tilde{f}_{t}$ are both decreasing
and convergent. Because the sequence
\[
z_{t}:=\tilde{y}_{0}-\tilde{y}_{t+1}=\sum_{s=0}^{t}\beta_{s+1}(\tilde{y}_{s}-\tilde{f}_{s})
\]
is uniformly bounded, plus the facts that $\tilde{y}_{s}-\tilde{f}_{s}$
is non-negative and $\sum_{s}\beta_{s}=\infty$, there is subsequence
$\{t'\}$ from $\{t\}$ such that
\[
\lim_{t'\to\infty}(\tilde{y}_{t'}-\tilde{f}_{t'})=0,
\]
which implies that $\tilde{y}_{t}$ and $\tilde{f}_{t}$ enjoy the same
limit. So
\[
\limsup_{t\to\infty}f_{t}=\lim_{t\to\infty}\tilde{f}_{t}=\lim_{t\to\infty}\tilde{y}_{t}\ge\limsup_{t\to\infty}y_{t}.
\]
The proof is complete.
\end{proof}

The following two results of stochastic approximation are important
in our argument.
\begin{lem}
\label{lem:4.2}Let $\{\mathcal{F}_{t}:t=0,1,2,\dots\}$ be a filtration
on a probability space $(\Omega,\mathcal{F},\Pb)$, and for each $t$,
let $r_{t}$ and $\alpha_{t}$ be $\mathcal{F}_{t}$-adapted scalar processes, where $\alpha_{t}\in[0,1]$ satisfies (\ref{eq:step-size-1}).
Suppose that there is an increasing sequence of stopping times $(\tau_{k}:k=1,2,\dots)$
such that
\begin{enumerate}
\item $\Pb(\sup_{k}\tau_{k}=\infty)>0$;
\item $\E[\mathbf{1}_{\{t<\tau_{k}\}}r_{t+1}|\mathcal{F}_{t\wedge\tau_{k}}]=0$
for all $t,k$;
\item $\E[\mathbf{1}_{\{t<\tau_{k}\}}r_{t+1}^{2}\big|\mathcal{F}_{t\wedge\tau_{k}}]\le\mu_{k}$
a.s. with a sequence of deterministic numbers $\mu_{k}$.
\end{enumerate}
Let $w_{t}$ satisfy the recursion: 
\[
w_{t+1}=(1-\alpha_{t})w_{t}+\alpha_{t}r_{t+1},\quad t=0,1,2\dots
\]
Then $\lim_{t\to\infty}w_{t}=0$ on $\{\sup_{k}\tau_{k}=\infty\}$
a.s.
\end{lem}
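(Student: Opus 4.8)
The plan is to prove this as a variant of the classical Robbins--Monro / stochastic approximation convergence theorem, but adapted to handle the randomization introduced by the stopping times $\tau_k$. The underlying intuition is that $w_{t+1}=(1-\alpha_t)w_t+\alpha_t r_{t+1}$ is a stochastic averaging scheme whose driving noise $r_{t+1}$ has conditional mean zero and bounded conditional second moment, so $w_t$ should be driven to zero; the only subtlety is that these martingale-difference and bounded-variance properties are only guaranteed \emph{up to} the stopping time $\tau_k$, and we only know that $\sup_k\tau_k=\infty$ with positive probability rather than surely.

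First I would fix $k$ and introduce the stopped process $w_t^{(k)}:=w_{t\wedge\tau_k}$, together with the stopped noise $r_{t+1}^{(k)}:=\mathbf{1}_{\{t<\tau_k\}}r_{t+1}$. The point of this truncation is that on the event $\{t<\tau_k\}$ the recursion is unchanged, while past $\tau_k$ the increment is frozen, so that hypotheses (2) and (3) become \emph{global} martingale-difference and uniform-variance conditions for the stopped process: $\E[r_{t+1}^{(k)}\mid\mathcal F_{t\wedge\tau_k}]=0$ and $\E[(r_{t+1}^{(k)})^2\mid\mathcal F_{t\wedge\tau_k}]\le\mu_k$ a.s. I would then verify that $w_t^{(k)}$ satisfies a genuine recursion of Robbins--Monro type with step sizes $\alpha_{t}\mathbf 1_{\{t<\tau_k\}}$, which still satisfy (\ref{eq:step-size-1}) on the event $\{\tau_k=\infty\}$, and with a martingale-difference noise of uniformly bounded variance $\mu_k$.

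Next, for this fixed $k$ I would invoke the standard stochastic approximation result (the classical supermartingale convergence argument of Robbins--Siegmund, or equivalently the scalar special case of the Jaakkola--Jordan--Singh / Tsitsiklis theorem cited in the paper) to conclude $w_t^{(k)}\to 0$ a.s.\ on the event $\{\tau_k=\infty\}$. The mechanism here is to form the nonnegative process $(w_t^{(k)})^2$, take conditional expectations, use $\E[r_{t+1}^{(k)}\mid\mathcal F_{t\wedge\tau_k}]=0$ to kill the cross term, and bound the remaining $\alpha_t^2\,\E[(r_{t+1}^{(k)})^2\mid\cdot]\le\mu_k\alpha_t^2$, which is summable by (\ref{eq:step-size-1}); the Robbins--Siegmund lemma then forces both convergence of $(w_t^{(k)})^2$ and summability of the ``decay'' term, and the divergence $\sum_t\alpha_t=\infty$ rules out any positive limit, giving $w_t^{(k)}\to 0$.

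Finally I would patch the results across $k$. On the event $\{\sup_k\tau_k=\infty\}$, for almost every $\omega$ there is some $k$ (depending on $\omega$) with $\tau_k(\omega)=\infty$, and on $\{\tau_k=\infty\}$ the trajectories of $w_t$ and $w_t^{(k)}$ coincide for all $t$; hence $w_t(\omega)=w_t^{(k)}(\omega)\to 0$. Taking a countable union over $k$ of the a.s.\ null exceptional sets preserves the almost-sure conclusion, yielding $\lim_{t\to\infty}w_t=0$ a.s.\ on $\{\sup_k\tau_k=\infty\}$, which is exactly the claim; the positivity hypothesis (1) simply guarantees this event is non-trivial. I expect the main obstacle to be the bookkeeping that makes the truncation rigorous, namely checking that hypotheses (2) and (3), which are phrased with the indicator $\mathbf 1_{\{t<\tau_k\}}$ and conditioning on $\mathcal F_{t\wedge\tau_k}$, really do deliver a bona fide martingale-difference sequence adapted to the filtration $(\mathcal F_{t\wedge\tau_k})_t$ to which a black-box stochastic approximation theorem applies; once the stopped problem is correctly set up, the convergence itself is classical.
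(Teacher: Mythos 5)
Your proof follows essentially the same route as the paper's: localize with the stopping times $\tau_k$ so that hypotheses (2)--(3) become a global martingale-difference / bounded-conditional-variance condition for the stopped recursion, invoke a classical stochastic-approximation convergence theorem (the paper cites Dvoretzky's extended theorem where you use Robbins--Siegmund; the paper also keeps the step sizes $\alpha_t$ unstopped so that the auxiliary sequence converges everywhere, which sidesteps your worry about $\sum_t\alpha_t\mathbf{1}_{\{t<\tau_k\}}$), and then patch over $k$. The one caveat --- shared with the paper's own concluding sentence --- is that your last step reads $\{\sup_k\tau_k=\infty\}$ as $\bigcup_k\{\tau_k=\infty\}$, which is not literally implied (take $\tau_k=k$); the convergence is really established, and in the paper only ever used, on the latter event.
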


\begin{proof}
First of all, we assume that $\E[r_{t+1}|\mathcal{F}_{t}]=0$ and
$\E[r_{t+1}^{2}|\mathcal{F}_{t}]\leq\mu$ a.s. with some
number $\mu$. Then convergence of $w_{t}$ follows from some classical
results in stochastic approximation theory, e.g., Dvoretzky's extended
theorem~\cite{Dvoretzky1956}. 

Now for any $k\geq0$, define $r_{t}^{k}=\mathbf{1}_{\{t<\tau_{k}\}}r_{t}$
and $\mathcal{F}_{t}^{k}=\mathcal{F}_{t\wedge\tau_{k}}$. Then from
the assumptions and the above argument, the sequence $w_{t}^{k}$
constructed by
\[
w_{t+1}^{k}=(1-\alpha_{t})w_{t}^{k}+\alpha_{t}r_{t+1}^{k},\quad w_{0}^{k}=w_{0}
\]
converges to zero a.s. Notice that $w_{t}^{k}=w_{t}$ for all $t<\tau_{k}$,
so we have $\lim_{t\to\infty}w_{t}=0$ on $\{\sup_{k}\tau_{k}=\infty\}$
a.s.
\end{proof}

\begin{lem}
\label{lem:JJS}Let $\alpha_{t}$ be a $[0,1]$-valued process adapted
to a filtration $\mathcal{F}_{t}$ and satisfy (\ref{eq:step-size-1}),
and let $R_{t}$ be an $\mathcal{F}_{t}$-adapted process with
values in a vector space equipped with norm $\|\cdot\|$. Then the
iterative process
\[
X_{t+1}=(1-\alpha_{t})X_{t}+\alpha_{t}R_{t+1}
\]
converges to zero a.s. under the following assumptions:
\begin{enumerate}
\item $\|\E[R_{t+1}\vert\mathcal{F}_{t}]\|\le\lambda\|X_{t}\|$ a.s. with some
constant $\lambda<1$,
\item $\E[\|R_{t+1}\|^{2}\vert\mathcal{F}_{t}]\le\mu_{t}$ a.s., where $\mu_{t}\in\mathcal{F}_t$ and $\Pb\{\sup_t |\mu_t|<\infty\}=1$.
\end{enumerate}
\end{lem}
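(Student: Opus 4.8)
The statement to prove is Lemma~\ref{lem:JJS}, a stochastic approximation result with a contraction-type condition on the conditional mean and a random (but a.s.\ bounded) conditional second-moment bound.

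The plan is to reduce Lemma~\ref{lem:JJS} to the scalar comparison machinery already developed in Lemma~\ref{lem:4.2} and the deterministic recursion Lemma~\ref{lem:det-iter}. First I would pass to norms. Writing $y_t = \|X_t\|$, the recursion gives $\|X_{t+1}\| \le (1-\alpha_t)\|X_t\| + \alpha_t \|R_{t+1}\|$, so it is natural to decompose $R_{t+1} = \E[R_{t+1}\mid\mathcal{F}_t] + (R_{t+1} - \E[R_{t+1}\mid\mathcal{F}_t])$. The first term is controlled by assumption~(1) as $\|\E[R_{t+1}\mid\mathcal{F}_t]\| \le \lambda\|X_t\| = \lambda y_t$, contributing a genuine contraction factor $\lambda < 1$; the second term $m_{t+1} := R_{t+1} - \E[R_{t+1}\mid\mathcal{F}_t]$ is a martingale difference whose conditional second moment is bounded via assumption~(2). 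The intended inequality for the scalar process is therefore
\begin{equation}
y_{t+1} \le (1-\alpha_t)y_t + \alpha_t\big(\lambda y_t + \|m_{t+1}\|\big).\label{eq:plan-scalar}
\end{equation}

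Next I would handle the noise term $\|m_{t+1}\|$ using a separately constructed auxiliary process. Define $w_t$ by $w_{t+1} = (1-\alpha_t)w_t + \alpha_t\|m_{t+1}\|$; since $\{m_{t+1}\}$ is a martingale difference sequence with a.s.\ bounded conditional second moments, Lemma~\ref{lem:4.2} applies with the stopping-time localization $\tau_k = \inf\{t : \mu_t > k\}$ (which satisfies $\Pb(\sup_k \tau_k = \infty) = 1$ by assumption~(2)), and yields $w_t \to 0$ a.s. I then split $y_t = u_t + w_t$, where $u_t$ absorbs the deterministic contraction part. Substituting into \eqref{eq:plan-scalar} and subtracting the $w$-recursion, the process $u_t$ satisfies $u_{t+1} \le (1-\alpha_t)u_t + \alpha_t \lambda(u_t + w_t) = (1-\alpha_t(1-\lambda))u_t + \alpha_t\lambda w_t$. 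Because $w_t \to 0$, Lemma~\ref{lem:det-iter} (applied pathwise with $\beta_t = \alpha_t(1-\lambda)$, whose divergence follows from $\sum_t\alpha_t=\infty$, and with forcing term $f_t = \tfrac{\lambda}{1-\lambda}w_t \to 0$) forces $\limsup_t u_t \le 0$, hence $u_t\to 0$ and therefore $y_t = \|X_t\| \to 0$ a.s.

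The main obstacle is the interaction between the random, merely a.s.-bounded second-moment bound $\mu_t$ and the martingale convergence step: the classical Dvoretzky/Robbins--Monro results invoked inside Lemma~\ref{lem:4.2} need a \emph{deterministic} variance bound, so I expect the real work to lie in setting up the stopping-time localization correctly, verifying that $\tau_k$ genuinely satisfies hypotheses (2)--(3) of Lemma~\ref{lem:4.2} (in particular that $\E[\mathbf{1}_{\{t<\tau_k\}}\|m_{t+1}\|^2 \mid \mathcal{F}_{t\wedge\tau_k}] \le k$ holds with the right measurability), and confirming that $\Pb(\sup_k\tau_k=\infty)=1$ so that convergence on the localized event is in fact almost-sure convergence. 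A secondary technical point is that the contraction constant $\lambda<1$ is used in two places—once to kill the $u$-recursion via the effective step size $\alpha_t(1-\lambda)$ and once to guarantee the forcing term vanishes—so I would take care that the pathwise application of Lemma~\ref{lem:det-iter} is legitimate on the full-probability event where $w_t\to0$.
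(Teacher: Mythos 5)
There is a genuine gap in the step that is supposed to kill the noise. You take norms of the recursion first, obtaining $y_{t+1}\le(1-\alpha_t)y_t+\alpha_t(\lambda y_t+\|m_{t+1}\|)$ with $m_{t+1}:=R_{t+1}-\E[R_{t+1}\mid\mathcal{F}_t]$, and then define $w_{t+1}=(1-\alpha_t)w_t+\alpha_t\|m_{t+1}\|$ and claim Lemma~\ref{lem:4.2} gives $w_t\to0$. But Lemma~\ref{lem:4.2} requires the driving term to have zero conditional mean, and $\|m_{t+1}\|$ is not a martingale difference: $\E[\|m_{t+1}\|\mid\mathcal{F}_t]\ge0$ and is typically bounded away from zero. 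Concretely, if $m_{t+1}$ are i.i.d.\ symmetric $\pm1$ variables, then $\|m_{t+1}\|\equiv1$ and $w_t\to1$, not $0$. With your decomposition the forcing term $\tfrac{\lambda}{1-\lambda}w_t$ in the $u$-recursion then has a strictly positive $\limsup$, so Lemma~\ref{lem:det-iter} only yields boundedness of $y_t$ (up to a constant depending on the noise level), not convergence to zero. The error is in the order of operations: passing to norms before subtracting the noise destroys the cancellation that martingale averaging provides.

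The repair is exactly what the paper does. Keep the auxiliary process vector-valued: set $S_{t+1}=R_{t+1}-\E[R_{t+1}\mid\mathcal{F}_t]$ and $W_{t+1}=(1-\alpha_t)W_t+\alpha_t S_{t+1}$; Lemma~\ref{lem:4.2} (applied after the stopping-time localization you describe, which is fine) legitimately gives $W_t\to0$ a.s.\ because $S_{t+1}$ itself has zero conditional mean and controlled conditional second moment. Only then set $Y_t=X_t-W_t$, whose recursion $Y_{t+1}=(1-\alpha_t)Y_t+\alpha_t\E[R_{t+1}\mid\mathcal{F}_t]$ contains no noise term at all; taking norms and using assumption (1) with $\|X_t\|\le\|Y_t\|+\|W_t\|$ gives $\|Y_{t+1}\|\le[1-(1-\lambda)\alpha_t]\|Y_t\|+(1-\lambda)\alpha_t\tfrac{\lambda}{1-\lambda}\|W_t\|$, and your intended application of Lemma~\ref{lem:det-iter} with $\beta_t=(1-\lambda)\alpha_t$ then goes through verbatim, since the forcing term now genuinely tends to zero. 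The remaining ingredients of your plan (the localization $\tau_k$, the use of $\lambda<1$ in both places, the pathwise application of Lemma~\ref{lem:det-iter}) are sound.
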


\begin{proof}
By means of stopping skill, it suffices to prove the lemma with $\mu_{k}$
dominated by a constant $\mu$. Let 
\[
S_{t+1}=R_{t+1}-\E[R_{t+1}\vert\mathcal{F}_{t}].
\]
Applying Lemma~\ref{lem:4.2}, one has that the vector sequence $W_{t}$
defined by
\[
W_{t+1}=(1-\alpha_{t})W_{t}+\alpha_{t}S_{t+1}
\]
converges to zero a.s. Setting $Y_{t}=X_{t}-W_{t}$, then
\begin{align*}
\|Y_{t+1}\| & =\|(1-\alpha_{t})Y_{t}+\alpha_{t}\E[R_{t+1}\vert\mathcal{F}_{t}]\|\\
 & \le(1-\alpha_{t})\|Y_{t}\|+\lambda\alpha_{t}\|Y_{t}+W_{t}\|\\
 & \le[1-(1-\lambda)\alpha_{t}]\|Y_{t}\|+(1-\lambda)\alpha_{t}\frac{\lambda}{1-\lambda}\|W_{t}\|.
\end{align*}
From Lemma~\ref{lem:det-iter} it follows that
\[
\limsup_{t\to\infty}\|Y_{t}\|\le\frac{\lambda}{1-\lambda}\lim_{t\to\infty}\|W_{t}\|=0\quad\text{a.s.}
\]
This concludes the lemma.
\end{proof}

\section{Proof of Theorem~\ref{thm:main}}

It is easily seen that (d) $\Rightarrow$ (c). In the coming five
subsections, we shall prove the relations (a) $\Rightarrow$ (b),
(b) $\Rightarrow$ (a), (b) $\Rightarrow$ (c), (c) $\Rightarrow$
(b), and (c) $\Rightarrow$ (d), respectively.

Let us do some preparations. Recall that $[\vA_{t}^{\tr},\vN_{t}]$
with $t=1,2,\dots$ are independent and identically distributed matrix-valued
random variables on a probability space $(\Omega,\mathcal{F},\Pb)$,
and $[\vA^{\tr},\vN]$ denotes an independent copy of $[\vA_{1}^{\tr},\vN_{1}]$.
Introduce the filtration $(\mathcal{F}_{t})$ with $\mathcal{F}_{0}=\{\varnothing,\Omega\}$
and 
\[
\mathcal{F}_{t}=\sigma\{\vA_{s},\vN_{s}:s=1,\dots,t\},\quad t=1,2,\dots.
\]
Also recall that the control sequence is allowed to be any $\mathcal{F}_{t}$-adapted
process in $\R^{m}$ (not necessarily of feedback form). 

It is convenient to define two mappings 
\begin{equation}
\begin{aligned}\F_{t}(Q) & :=\vN_{t}+\vA_{t}^{\tr}\vpi(Q)\vA_{t},\\
\F(Q) & :=\E[\vN+\vA^{\tr}\vpi(Q)\vA]
\end{aligned}
\label{eq:phi}
\end{equation}
for $Q\in\bS_{+}^{d}$; apparently, $\F(Q)=\E[\F_{t}(Q)]$. 

An important fact is that $\F_{t}(\cdot)$ and $\F(\cdot)$ are both
increasing, i.e., if $Q_{1}\le Q_{2}$ then $\F_{t}(Q_{1})\le\F_{t}(Q_{2})$
and $\F(Q_{1})\le\F(Q_{2})$. This follows from the monotonicity of
$\vpi(\cdot)$, see Lemma~\ref{lem:Pi_monot}.

Occasionally, $\vA_{t}\in\R^{n\times d}$ is written into a block
matrix $\vA_{t}=[A_{t},B_{t}]$ with $A_{t}\in\R^{n\times n}$, then
the system reads
\begin{equation}
x_{t+1}=A_{t+1}x_{t}+B_{t+1}u_{t}.\label{eq:systems}
\end{equation}
From the condition of Theorem~\ref{thm:main}, there are two numbers
$\eps_{0}\in(0,1]$ and $\mu_{0}>0$ such that
\begin{equation}
\E[N+\vA^{\tr}\vA]\ge\eps_{0}I_{d},\quad\E[\|N\|_{2}^{2}+\|\vA^{\tr}\vA\|_{2}^{2}]\le\mu_{0}.\label{eq:eps}
\end{equation}

Finally, we claim that, if $u_{t}\in L^{2}(\Omega)$ for all $t$,
then $x_{t}\in L^{2}(\Omega)$ for all $t$. Indeed, assuming $x_{t}\in L^{2}(\Omega)$
we compute
\begin{align*}
\E[|x_{t+1}|^{2}\vert\mathcal{F}_{t}] & =\E\bigg\{\big[x_{t}^{\tr},u_{t}^{\tr}\big]\vA_{t+1}^{\tr}\vA_{t+1}\begin{bmatrix}x_{t}\\
u_{t}
\end{bmatrix}\bigg|\mathcal{F}_{t}\bigg\}\\
 & =\big[x_{t}^{\tr},u_{t}^{\tr}\big](\E[\vA_{t+1}^{\tr}\vA_{t+1}])\begin{bmatrix}x_{t}\\
u_{t}
\end{bmatrix}\\
 & \le c(\mu_{0})(|x_{t}|^{2}+|u_{t}|^{2}),
\end{align*}
thus, $\E[\|x_{t+1}\|^{2}]<\infty$. The claim is so verified by induction.
This ensures the well-posedness of the LQ problem in finite horizon.

\subsection{\label{sec:SLQR-ARE}From LQ problem to ARE}

Consider the optimal control in finite horizon: let $T$ be a large
natural number and define
\[
V_{T}(x,t)=\inf_{u_{t},\dots,u_{T-1}}\E\bigg\{\sum_{s=t}^{T-1}\big[x_{s}^{\tr},u_{s}^{\tr}\big]\vN_{s+1}\begin{bmatrix}x_{s}\\
u_{s}
\end{bmatrix}\,\bigg|\,x_{t}=x\bigg\}.
\]
Recalling (\ref{eq:value}) the value function $V(\cdot)$ of the
original problem, it is easily seen that 
\[
V_{T}(x,t)\le V(x)\quad\forall\,x\in\R^{n},\,t=0,1,\dots,T-1.
\]
For this finite horizon problem, it follows from Bellman's principle
of optimality (cf. {[}Aoki, p. 32{]}) that
\begin{align*}
V_{T}(x,t) & =\inf_{u_{t}}\E\bigg\{\big[x_{t}^{\tr},u_{t}^{\tr}\big]\vN_{t+1}\begin{bmatrix}x_{t}\\
u_{t}
\end{bmatrix}+V_{T}(x_{t+1},t+1)\,\bigg|\,x_{t}=x\bigg\}
\end{align*}
for $t=0,1,\dots,T-1$; in particular, we set $V_{T}(x,T)=0$. Assume
that $V_{T}(x,t+1)$ is a quadratic form for some $t\le T-1$, namely,
\[
V_{T}(x,t+1)=x^{\tr}\mathcal{K}_{t+1,T}x,\quad\text{where }\mathcal{K}_{t+1,T}\text{ is a symmetric matrix}.
\]
Then $V_{T}(x,t)$ is also a quadratic form:
\begin{align*}
V_{T}(x,t) & =\inf_{u_{t}}\E\bigg\{\big[x_{t}^{\tr},u_{t}^{\tr}\big]\vN_{t+1}\begin{bmatrix}x_{t}\\
u_{t}
\end{bmatrix}+\big[x_{t}^{\tr},u_{t}^{\tr}\big]\vA_{t+1}^{\tr}\mathcal{K}_{t+1,T}\vA_{t+1}\begin{bmatrix}x_{t}\\
u_{t}
\end{bmatrix}\,\bigg|\,x_{t}=x\bigg\}\\
 & =\inf_{u_{t}}\E\bigg\{\big[x^{\tr},u_{t}^{\tr}\big]\big(\vN_{t+1}+\vA_{t+1}^{\tr}\mathcal{K}_{t+1,T}\vA_{t+1}\big)\begin{bmatrix}x\\
u_{t}
\end{bmatrix}\bigg\}\\
 & =\inf_{u\in\R^{m}}\big[x^{\tr},u^{\tr}\big]\E[\vN_{t+1}+\vA_{t+1}^{\tr}\mathcal{K}_{t+1,T}\vA_{t+1}]\begin{bmatrix}x\\
u
\end{bmatrix}\\
 & =x^{\tr}\Pi(\E[\vN_{t+1}+\vA_{t+1}^{\tr}\mathcal{K}_{t+1,T}\vA_{t+1}])x\\
 & =:x^{\tr}\mathcal{K}_{t,T}x.
\end{align*}
By induction, one has
\begin{equation}
V_{T}(x,t)=x^{\tr}\mathcal{K}_{t,T}x\quad\forall\,t=0,1,\dots,T,\label{eq:V-K}
\end{equation}
where the matrices $\mathcal{K}_{t,T}$ satisfy the following algebraic
Riccati equations:
\begin{equation}
\mathcal{K}_{t,T}=\Pi(\E[\vN+\vA^{\tr}\mathcal{K}_{t+1,T}\vA]),\quad\mathcal{K}_{T,T}=O.\label{eq:K-finite}
\end{equation}
The following result shows that ARE (\ref{eq:riccati}) has a solution
as long as the LQ problem is well-posed.
\begin{prop}
\label{prop:LQ-ARE}Define a sequence $\{K_{t}:t=0,1,2,\dots\}$ recursively
as follows:
\begin{equation}
\begin{aligned}K_{0} & =O,\\
K_{t+1} & =\vpi(\E[\vN+\vA^{\tr}K_{t}\vA]).
\end{aligned}
\label{eq:K}
\end{equation}
If LQ problem (\ref{eq:system})--(\ref{eq:cost}) is well-posed,
then $K_{t}$ converges to a matrix $K$ that solves ARE (\ref{eq:riccati}).
Moreover, the solution $K$ obtained here is the minimum solution
of ARE (\ref{eq:riccati}), i.e., $K\le\tilde{K}$ if $\tilde{K}$
also satisfies ARE (\ref{eq:riccati}).
\end{prop}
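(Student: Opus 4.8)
The plan is to exhibit $(K_t)$ as a monotone, uniformly bounded family of quadratic forms, extract a limit, and then pass to the limit in the defining recursion (\ref{eq:K}); minimality will follow from a monotone comparison with any other solution. First I would connect $(K_t)$ to the finite-horizon Riccati matrices of (\ref{eq:K-finite}) by reversing time: for a fixed horizon $T$, setting $K_s = \mathcal{K}_{T-s,T}$ one has $K_0 = \mathcal{K}_{T,T} = O$ and $K_{s+1} = \vpi(\E[\vN+\vA^{\tr}\mathcal{K}_{T-s,T}\vA])$, which is exactly recursion (\ref{eq:K}). Taking $s = T$ gives $K_t = \mathcal{K}_{0,t}$ for every $t$, hence $x^{\tr}K_t x = V_t(x,0)$ by (\ref{eq:V-K}). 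Well-posedness then supplies the essential upper bound $x^{\tr}K_t x = V_t(x,0) \le V(x) < \infty$ for all $x \in \R^n$ and all $t$.

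Next I would establish monotonicity of the iteration. Writing $g(K) := \vpi(\E[\vN+\vA^{\tr}K\vA])$, the inner map $K \mapsto \E[\vN+\vA^{\tr}K\vA]$ is order-preserving (conjugation by $\vA$ and taking expectation both preserve the semidefinite order), and $\vpi$ is order-preserving by Lemma~\ref{lem:Pi_monot}; thus $g$ is monotone. Since $K_1 = \vpi(\E[\vN]) \ge O = K_0$, induction gives $K_t \le K_{t+1}$ for all $t$. Consequently, for each fixed $x$ the scalar sequence $x^{\tr}K_t x$ is nondecreasing and bounded above by $V(x)$, hence convergent; applying this along the coordinate directions $e_i$ and $e_i \pm e_j$ and polarizing yields entrywise convergence $K_t \to K$ to some symmetric positive semidefinite $K$.

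The hard part will be justifying the passage to the limit in $K_{t+1} = \vpi(\E[\vN+\vA^{\tr}K_t\vA])$, because $\vpi$ is built from the Moore--Penrose pseudoinverse, which is discontinuous precisely where the rank of the $uu$-block changes. I would remove this obstacle using the standing hypothesis $\E[\vN] > O$. Setting $P^{(t)} := \E[\vN+\vA^{\tr}K_t\vA]$, one has $P^{(t)} \ge \E[\vN] > O$, so the block $P^{(t)}_{uu}$ is bounded below by $(\E[\vN])_{uu} > O$ uniformly in $t$, and the same holds for the limiting matrix $P := \E[\vN+\vA^{\tr}K\vA]$. On matrices whose $uu$-block is bounded away from singular, the pseudoinverse coincides with the genuine inverse and varies continuously; since $K_t \to K$ and the integrability bound (\ref{eq:eps}) justifies $P^{(t)} \to P$ by dominated convergence, continuity gives $\vpi(P^{(t)}) \to \vpi(P)$. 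Letting $t \to \infty$ in the recursion then yields $K = \vpi(P) = g(K)$, which is exactly ARE (\ref{eq:riccati}).

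Finally, for minimality I would argue by induction that $K_t \le \tilde{K}$ for every solution $\tilde{K}$ of (\ref{eq:riccati}). Any such $\tilde{K}$ is positive semidefinite (as a value of $\vpi$), so $K_0 = O \le \tilde{K}$; and if $K_t \le \tilde{K}$, then monotonicity of $g$ together with the fixed-point relation $g(\tilde{K}) = \tilde{K}$ gives $K_{t+1} = g(K_t) \le g(\tilde{K}) = \tilde{K}$. Passing to the limit yields $K \le \tilde{K}$, so the constructed $K$ is the minimal solution of the ARE.
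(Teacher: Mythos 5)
Your proof is correct and follows essentially the same route as the paper: identify $K_t$ with the finite-horizon value function to obtain the bound $x^{\tr}K_tx\le V(x)$ from well-posedness, extract a limit by monotonicity, pass to the limit in the recursion, and prove minimality by induction using the monotonicity of $\vpi$ (your derivation of the monotonicity of $K_t$ from the order-preserving Riccati map, rather than from the monotonicity of $V_T(x,0)$ in $T$, is an immaterial variation). The one point where you go beyond the paper is the passage to the limit through $\vpi$: the paper simply asserts that the limit satisfies the ARE, whereas you correctly flag the potential discontinuity of the Moore--Penrose pseudoinverse and use $\E[\vN]>O$ to keep $P^{(t)}_{uu}$ uniformly nonsingular --- a worthwhile clarification, with the caveat that Remark~3.3 of the paper claims the equivalence of (a) and (b) without that hypothesis, so a fully general argument would instead need the (true but more delicate) continuity of $\vpi$ along monotone sequences in $\bS_+^d$.
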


\begin{proof}
Comparing (\ref{eq:K-finite}) and (\ref{eq:K}), it is easily seen
that $K_{t}=\mathcal{K}_{T-t,T}$ for any $T>t$, which along with (\ref{eq:V-K})
yields $x^{\tr}K_{T-t}x=x^{{\rm T}}\mathcal{K}_{t,T}x=V_{T}(x,t)$.
According to its definition, $V_{T}(x,0)$ is increasing in $T$,
so $K_{T}$ is also increasing. Therefore, if LQ problem (\ref{eq:system})--(\ref{eq:cost})
is well-posed, then for any unit $x\in\R^{n}$,
\begin{equation}
x^{\tr}K_{T}x=V_{T}(x,0)\le V(x)<\infty,\label{eq:K<V}
\end{equation}
which means that $\{x^{\tr}K_{t}x:t\ge0\}$ is uniformly bounded,
as $K_{t}$ is increasing, this fact implies that $K_{t}$ converges
to a matrix, denoted by $K$. From the recursive relation of $K_{t}$,
one has that $K$ obtained satisfies ARE. Moreover, (\ref{eq:K<V})
implies that
\begin{equation}
x^{\tr}Kx\le V(x)\quad\forall\,x\in\R^{n}.\label{eq:K<V-2}
\end{equation}
Let $\tilde{K}$ be any solution of ARE (\ref{eq:riccati}). Since
$K_{0}=O\le\tilde{K}$, it follows from induction and the monotonicity
of $\Pi(\cdot)$ that $K_{t}\le\tilde{K}$, which implies the limit
$K$ is also dominated by $\tilde{K}$. The proof is complete.
\end{proof}

\subsection{From ARE to LQ problem}

Assume that ARE (\ref{eq:riccati}) has a solution $K$. Select the
control
\begin{equation}
u_{t}=\varGamma x_{t}\ \ \text{ with }\ \varGamma=\Gamma(\E[\vN+\vA^{\tr}K\vA]),\label{eq:gamma}
\end{equation}
where $\Gamma(\cdot)$ is defined in (\ref{eq:pi}). Recalling (\ref{eq:systems}),
the system becomes
\[
x_{t+1}=(A_{t+1}+B_{t+1}\varGamma)x_{t}=:\bar{A}_{t+1}x_{t},
\]
and the cost function reads
\[
J(x,\varGamma x_{\cdot})=\sum_{t=0}^{\infty}x_{t}^{\tr}\big[I_{n},\varGamma^{\tr}\big]\vN_{t+1}\begin{bmatrix}I_{n}\\
\varGamma
\end{bmatrix}x_{t}=:\sum_{t=0}^{\infty}x_{t}^{\tr}\bar{M}_{t+1}x_{t}.
\]
To show that $\E[J(x,\varGamma x_{\cdot})]<\infty$, we define
\[
\bar{V}_{T}(x,t)=\E\bigg\{\sum_{s=t}^{T-1}x_{s}^{\tr}\bar{M}_{s+1}x_{s}\,\bigg|\,x_{t}=x\bigg\}.
\]
With a similar argument as in the last subsection, one can show that
there is a symmetric matrix $\bar{\mathcal{K}}_{t,T}$ such that $x^{\tr}\bar{\mathcal{K}}_{t,T}x=\bar{V}_{T}(x,t)$,
and
\[
\bar{\mathcal{K}}_{t,T}=\E[\bar{M}_{t+1}+\bar{A}_{t+1}^{\tr}\bar{\mathcal{K}}_{t+1,T}\bar{A}_{t+1}],\quad\bar{\mathcal{K}}_{T,T}=O.
\]
Direct computation gives that
\[
\E[\bar{M}_{t+1}+\bar{A}_{t+1}^{\tr}\bar{\mathcal{K}}_{t+1,T}\bar{A}_{t+1}]=\Pi(\E[\vN_{t+1}+\vA_{t+1}^{\tr}\bar{\mathcal{K}}_{t+1,T}\vA_{t+1}]),
\]
which implies $\bar{\mathcal{K}}_{t,T}=\mathcal{K}_{t,T}$, where
$\mathcal{K}_{t,T}$ is defined in (\ref{eq:V-K}). Thus, one has
\[
\bar{V}_{T}(x,0)=x^{\tr}\bar{\mathcal{K}}_{0,T}x=x^{\tr}\mathcal{K}_{0,T}x=x^{\tr}K_{T}x,
\]
where $K_{T}$ is defined in (\ref{eq:K}). It follows from induction
that $K_{T}\le K$, so
\begin{equation}
V(x)\le\E[J(x,\varGamma x_{\cdot})]\le\limsup_{T\to\infty}\bar{V}_{T}(x,0)\le x^{\tr}Kx<\infty\quad\forall\,x\in\R^{n}.\label{eq:V<K}
\end{equation}
Therefore, the LQ problem is well-posed.
\begin{rem}
\label{rem:optimal}If ARE has a unique solution $K$ (which
is proved in Lemma~\ref{lem:ARE-unique}), then it follows from
(\ref{eq:K<V-2}) and (\ref{eq:V<K}) that $V(x)=x^{\tr}Kx$, and
consequently, $V(x)=\E[J(x,\varGamma x_{\cdot})]$ for all $x\in\R^{n}$.
This means that the feedback control $u_{t}=\Gamma(\E[\vN+\vA^{\tr}K\vA])x_{t}$
is optimal.
\end{rem}

\begin{rem}
The assumption that $\E[\vN]$ is positive definite is not used above
to prove the equivalence of statements (a) and (b), i.e., the well-posedness
of LQ problem and the solvability of ARE. Off course, if stament (a) or (b) holds, then it must have that $\E[\vN]$ is non-negative definite. But it allows $\E[\vN]$ to be degenerate. 
\end{rem}

\subsection{From ARE to boundedness of $Q_{t}$}

Assume that ARE (\ref{eq:riccati}) has a solution $K$. 
The basic idea of this part is to transform the problem into an equivalent form
in which the solution of ARE becomes the identity matrix. Denote 
\begin{align}
Q^{*} & :=\E[\vN+\vA^{\tr}K\vA],\label{eq:Q*}\\
\varGamma & :=\Gamma(Q^{*})=-(Q_{uu}^{*})^{-1}Q_{ux}^{*}.\nonumber 
\end{align}
Since $\E[N]\ge\eps_{0}I_{d}$, one has 
\begin{equation}
K=\vpi(\E[\vN+\vA^{\tr}K\vA])\ge\vpi(\E[\vN])\ge\eps_{0}I_{n}>O.\label{eq:K-posit}
\end{equation}
Similarly, $\E[Q^{*}]\ge\eps_{0}I_{d}$. Let $L$ and $M$ be invertible
matrices such that
\[
L^{\tr}KL=I_{n},\quad M^{\tr}Q_{uu}^{*}M=I_{m}.
\]
Introducing
\[
C:=\begin{bmatrix}I_{n} & O\\
\varGamma & I_{m}
\end{bmatrix}\left[\begin{array}{cc}
L & O\\
O & M
\end{array}\right]=\begin{bmatrix}L & O\\
\varGamma L & M
\end{bmatrix},
\]
we define
\begin{align*}
\tilde{\vA}_{t} & :=L^{-1}\vA_{t}C,\\
\tilde{\vN}_{t} & :=C^{\tr}\vN_{t}C.
\end{align*}
It is easily verified that
\[
I_{n}=\Pi(\E[\tilde{\vN}+\tilde{\vA}^{\tr}\tilde{\vA}]).
\]
This means, after transformation, the solution of (new) ARE is the
identity matrix $I_{n}$.

Now we reformulate the Q-learning process. Let
\[
\tilde{Q}_{t}:=C^{\tr}Q_{t}C.
\]
We define
\begin{align*}
\tilde{\F}_{t}(Q) & :=\tilde{\vN}_{t}+\tilde{\vA}_{t}^{\tr}\vpi(Q)\tilde{\vA}_{t},\\
\tilde{\F}(Q) & :=\E[\tilde{\F}_{t}(Q)]=\E[\tilde{\vN}+\tilde{\vA}^{\tr}\vpi(Q)\tilde{\vA}].
\end{align*}
Observing 
\begin{equation}
\begin{aligned}C^{\tr}QC & =\begin{bmatrix}\begin{gathered}L^{\tr}(Q_{xu}\varGamma+\varGamma^{\tr}Q_{ux}\\
+Q_{xx}+\varGamma^{\tr}Q_{uu}\varGamma)L
\end{gathered}
 & L^{\tr}(Q_{xu}+\varGamma^{\tr}Q_{uu})M\\
\\
M^{\tr}(Q_{ux}+Q_{uu}\varGamma)L & M^{\tr}Q_{uu}M
\end{bmatrix},\end{aligned}
\label{eq:4.8}
\end{equation}
 one can check that 
\begin{equation}
\vpi(C^{\tr}QC)=L^{\tr}\vpi(Q)L.\label{eq:4.3-6}
\end{equation}
Under the above transformation, the iteration in Algorithm~\ref{alg:Q-learning}
is equivalently written into
\[
\begin{aligned}\tilde{Q}_{0} & =C^{\tr}Q_{0}C,\\
\tilde{Q}_{t+1} & =\tilde{Q}_{t}+\alpha_{t}(\tilde{\F}_{t+1}(\tilde{Q}_{t})-\tilde{Q}_{t}).
\end{aligned}
\]
According to $Q^{*}=\F(Q^{*})$, one can see that 
\begin{equation}
I_{d}=\tilde{\F}(I_{d}).\label{eq:Id}
\end{equation}
In other words, $I_{d}$ is a fixed point of $\tilde{\F}(\cdot)$. 

Define \emph{affine} mappings
\begin{equation}
\Psi_{t}(Q):=\tilde{\vN}_{t}+\tilde{\vA}_{t}^{\tr}Q_{xx}\tilde{\vA}_{t}\ge\tilde{\F}_{t}(Q)\label{eq:Psi_t}
\end{equation}
and
\begin{equation}
\Psi(Q):=\E[\Psi_{t}(Q)]\ge\tilde{\F}(Q).\label{eq:4.2-2}
\end{equation}
Comparing to $\tilde{\F}(\cdot)$, the most important property of
$\Psi(\cdot)$ is that $\Psi(\cdot)$ is a contraction mapping under
matrix $2$-norm. To see this, one first obtains from (\ref{eq:Id}) and
(\ref{eq:4.2-2}) that
\begin{equation}
I_{d}=\Psi(I_{d})=\E[\tilde{\vN}+\tilde{\vA}^{\tr}\tilde{\vA}].\label{eq:psi_eq}
\end{equation}
$\E[\vN]>O$ implies $\E[\tilde{\vN}]>O$, so there is a positive
number $\lambda<1$ such that
\begin{equation}
\E[\tilde{\vA}^{\tr}\tilde{\vA}]=I_{d}-\E[\tilde{N}]\le\lambda I_{d}.\label{eq:lambda}
\end{equation}
Using an elementary fact from linear algebra: $S^{\tr}TS\le\|T\|_{2}\,S^{\tr}S$,
where $T$ is a symmetric matrix, one has that for any $Q_{1},Q_{2}\in\bS_{+}^{d}$,
\begin{align*}
\|\Psi(Q_{1})-\Psi(Q_{2})\|_{2} & =\|\E[\tilde{\vA}^{\tr}(Q_{1,xx}-Q_{2,xx})\tilde{\vA}]\|_{2}\\
 & \le\|Q_{1,xx}-Q_{2,xx}\|_{2}\|\E[\tilde{\vA}^{\tr}\tilde{\vA}]\|_{2}\\
 & \le\lambda\|Q_{1}-Q_{2}\|_{2}.
\end{align*}
Since $\lambda<1$, $\Psi(\cdot)$ is a contraction mapping.

Introduce the iteration:
\begin{equation}
\begin{aligned}P_{0} & =C^{\tr}Q_{0}C,\\
P_{t+1} & =P_{t}+\alpha_{t}(\Psi_{t+1}(P_{t})-P_{t}).
\end{aligned}
\label{eq:P-process}
\end{equation}
From (\ref{eq:Psi_t}) one can see that
\begin{equation}
\tilde{Q}_{t}\le P_{t}\quad\forall\,t=0,1,2,\dots\label{eq:compar}
\end{equation}
So $P_{t}$ is an upper bound process of $\tilde{Q}_{t}$. 
\begin{lem}
\label{lem:P-bdd}Under the above setting, $P_{t}$ converges to $I_{d}$
a.s. Consequently, the sequence $Q_{t}$ is bounded a.s.
\end{lem}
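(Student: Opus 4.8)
The plan is to center the recursion (\ref{eq:P-process}) at its fixed point and invoke Lemma~\ref{lem:JJS}. Setting $X_t := P_t - I_d$ and subtracting $I_d = (1-\alpha_t)I_d + \alpha_t I_d$ from (\ref{eq:P-process}) gives
\[
X_{t+1} = (1-\alpha_t)X_t + \alpha_t R_{t+1}, \qquad R_{t+1} := \Psi_{t+1}(P_t) - I_d .
\]
Because $P_t$ is $\mathcal{F}_t$-measurable while $(\tilde{\vA}_{t+1},\tilde{\vN}_{t+1})$ is independent of $\mathcal{F}_t$, the i.i.d.\ hypothesis yields $\E[R_{t+1}\,|\,\mathcal{F}_t] = \Psi(P_t) - I_d = \Psi(P_t)-\Psi(I_d)$, and the contraction estimate already established for $\Psi(\cdot)$ gives $\|\E[R_{t+1}\,|\,\mathcal{F}_t]\|_2 \le \la\|X_t\|_2$ with $\la<1$. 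This is precisely hypothesis~(1) of Lemma~\ref{lem:JJS} in the matrix $2$-norm.

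For the variance I would use $\Psi_{t+1}(P_t) = \tilde{\vN}_{t+1} + \tilde{\vA}_{t+1}^{\tr}(P_t)_{xx}\tilde{\vA}_{t+1}$ together with $\|(P_t)_{xx}\|_2\le\|P_t\|_2$ to obtain $\E[\|R_{t+1}\|_2^2\,|\,\mathcal{F}_t]\le\mu_t$ with $\mu_t:=\kappa(1+\|X_t\|_2^2)$ for a deterministic $\kappa$; the finiteness of $\E[\|\tilde{\vN}\|_2^2+\|\tilde{\vA}^{\tr}\tilde{\vA}\|_2^2]$ is inherited from (\ref{eq:eps}) since $\tilde{\vN}=C^{\tr}\vN C$ and $\tilde{\vA}=L^{-1}\vA C$ differ from $\vN,\vA$ only by the fixed matrices $C,L$. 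The main obstacle is that $\mu_t$ depends on the iterate, so the requirement $\Pb\{\sup_t|\mu_t|<\infty\}=1$ in hypothesis~(2) of Lemma~\ref{lem:JJS} is essentially the boundedness we are after, and a direct application would be circular. I would break the circle by localization. Put $\sigma_k:=\inf\{t:\|X_t\|_2>k\}$, let $R_{t+1}^{(k)}:=\mathbf{1}_{\{t<\sigma_k\}}R_{t+1}$, and define $X_{t+1}^{(k)}:=(1-\alpha_t)X_t^{(k)}+\alpha_t R_{t+1}^{(k)}$ with $X_0^{(k)}:=X_0$, so that $X_t^{(k)}=X_t$ for $t<\sigma_k$. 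On $\{t<\sigma_k\}$ we have $\|P_t\|_2\le k+1$, whence the conditional second moment of $R_{t+1}^{(k)}$ is dominated by the \emph{constant} $\kappa(1+(k+1)^2)$, while the contraction estimate survives stopping; thus Lemma~\ref{lem:JJS} applies to each $X^{(k)}$ and forces $X_t^{(k)}\to O$ a.s. Finally, since $\|X_t\|_2$ is a.s.\ finite at every fixed $t$, no finite time can exceed all the thresholds $k$, so $\sup_k\sigma_k=\infty$ a.s.; arguing as in the proof of Lemma~\ref{lem:4.2} (using $X_t^{(k)}=X_t$ for $t<\sigma_k$) upgrades this to $X_t\to O$, that is $P_t\to I_d$, almost surely.

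The boundedness of $Q_t$ is then a soft corollary. Since $\F_{t+1}(\cdot)\ge O$ and $\alpha_t\in[0,1]$, an induction gives $Q_t\ge O$, hence $O\le\tilde{Q}_t\le P_t$ by (\ref{eq:compar}); as $P_t$ converges it is a.s.\ bounded, and $O\le\tilde{Q}_t\le P_t$ forces $\|\tilde{Q}_t\|_2\le\|P_t\|_2$. Undoing the fixed change of variables through $Q_t=(C^{\tr})^{-1}\tilde{Q}_t C^{-1}$ then shows that $(Q_t)_{t\ge0}$ is bounded a.s.
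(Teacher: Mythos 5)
Your setup coincides with the paper's: center the recursion at $I_d$, record the contraction bound $\|\E[R_{t+1}\mid\mathcal{F}_t]\|_2\le\lambda\|X_t\|_2$ and the variance bound $\E[\|R_{t+1}\|_2^2\mid\mathcal{F}_t]\le\kappa(1+\|X_t\|_2^2)$, and observe that Lemma~\ref{lem:JJS} cannot be applied directly because $\mu_t$ grows with the iterate. The gap is in how you break the circle. Localizing at $\sigma_k=\inf\{t:\|X_t\|_2>k\}$ and applying Lemma~\ref{lem:JJS} to the stopped processes only yields $X_t\to O$ on the event $\{\sigma_k=\infty\}$, because $X_t^{(k)}$ agrees with $X_t$ only for $t\le\sigma_k$ and carries no information about $X_t$ afterwards. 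Taking the union over $k$ gives $X_t\to O$ on $\bigcup_k\{\sigma_k=\infty\}=\{\sup_t\|X_t\|_2<\infty\}$, i.e.\ convergence \emph{conditional on boundedness} --- which is exactly the statement still to be proved. The observation that $\sup_k\sigma_k=\infty$ a.s.\ is true but vacuous: it holds for any sequence of a.s.-finite random variables, bounded or not (take $\|X_t\|_2=t$), and the event $\{\sup_k\sigma_k=\infty\}$ is not the event $\bigcup_k\{\sigma_k=\infty\}$. In the paper, the corresponding Lemma~\ref{lem:4.2} is only ever used to conclude convergence on the set where the underlying sequence is bounded (the set $\Theta$ in Lemma~\ref{prop:comp}), so this distinction does not cause trouble there; here you need an unconditional conclusion, and it does not follow.

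What is missing is precisely the core of the paper's proof: an unconditional a.s.\ bound on $\sup_t\|X_t\|_2$. The paper obtains it by the rescaling argument of Tsitsiklis: introduce a slowly updated scale $m_t$ taking values in the powers $(1+\eps)^k$ with $\lambda(1+\eps)=1$ and $\|X_t\|_2\le m_t(1+\eps)$; show that the normalized noise accumulator $Z_{t+1}=(1-\alpha_t)Z_t+\alpha_t m_t^{-1}(\Psi_{t+1}(P_t)-\Psi(P_t))$ converges to zero (this use of Lemma~\ref{lem:JJS} is legitimate because the normalized noise has conditional second moment bounded by a constant); then prove by induction that once $\|Z_t\|_2<\eps/2$ one has $X_t\le m_\tau(I_d+Z_t^\tau)<m_\tau(1+\eps)I_d$, which prevents the scale $m_t$ from ever increasing again and hence bounds $X_t$. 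An alternative that would also close the gap is a Robbins--Siegmund type estimate, $\E[\|X_{t+1}\|^2\mid\mathcal{F}_t]\le(1+c\alpha_t^2)\|X_t\|^2+c\alpha_t^2$ in the Frobenius norm, giving a.s.\ convergence of $\|X_t\|^2$ and hence boundedness; but some such argument must be supplied before Lemma~\ref{lem:JJS} can be invoked. Your final step, deducing a.s.\ boundedness of $Q_t$ from $P_t\to I_d$ via $O\le\tilde{Q}_t\le P_t$ and the fixed change of variables, is correct.
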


\begin{proof}
Define $X_{t}=P_{t}-I_{d}$. Using the relation $I_{d}=\Psi(I_{d})$,
one has
\begin{align*}
X_{t+1}= & (1-\alpha_{t})X_{t}+\alpha_{t}R_{t+1}
\end{align*}
with
\[
R_{t+1}:=\Psi_{t+1}(P_{t})-\Psi(P_{t})+\E[\vA^{\tr}(P_{t,xx}-I_{n})\vA].
\]

Now we check that
\begin{align*}
\E[R_{t+1}\vert\mathcal{F}_{t}] & =\E[\Psi_{t+1}(P_{t})-\Psi(P_{t})\vert\mathcal{F}_{t}]+\E[\vA^{\tr}(P_{t,xx}-I_{n})\vA]\\
 & =\E[\vA^{\tr}(P_{t,xx}-I_{n})\vA]\leq \|P_{t,xx}-I_{n}\|_{2} \E[\vA^{\tr}\vA]\\
 & \le\lambda\|P_{t,xx}-I_{n}\|_{2}I_{d}\le\lambda\|X_{t}\|_{2}I_{d},
\end{align*}
which implies $\|\E[R_{t+1}\vert\mathcal{F}_{t}]\|_{2}\le\lambda\|X_{t}\|_{2}$.
Moreover, we have
\begin{align*}
\|R_{t+1}\|_{2}^{2} & \le3(\|\Psi_{t+1}(P_{t})\|_{2}^{2}+\|\Psi(P_{t})\|_{2}^{2}+\|\E[\vA^{\tr}(P_{t,xx}-I_{n})\vA]\|_{2}^{2})\\
 & \le6(\|N_{t+1}\|_{2}^{2}+\|\vA_{t+1}^{\tr}\vA_{t+1}\|_{2}^{2}\|P_{t}\|_{2}^{2}+\|\E[N]\|_{2}^{2}+\|\E[\vA^{\tr}\vA]\|_{2}^{2}\|P_{t}\|_{2}^{2}\\
 & \quad\quad+\|\E[\vA^{\tr}\vA]\|_{2}^{2}\|X_{t}\|_{2}^{2}).
\end{align*}
Since $\vA_{t+1}$ is independent of $\mathcal{F}_{t}$, 
\begin{align*}
\E[\|\vA_{t+1}^{\tr}\vA_{t+1}\|_{2}^{2}\|P_{t}\|_{2}^{2}\vert\mathcal{F}_{t}] & =\|P_{t}\|_{2}^{2}\,\E[\|\vA_{t+1}^{\tr}\vA_{t+1}\|_{2}^{2}\vert\mathcal{F}_{t}]\\
 & =\|P_{t}\|_{2}^{2}\,\E[\|\vA^{\tr}\vA\|_{2}^{2}].
\end{align*}
Using the relation $\|P_{t}\|_{2}\le1+\|X_{t}\|_{2}$ and recalling
the constant $\mu_{0}$ from (\ref{eq:eps}), one obtains
\[
\E[\|R_{t+1}\|_{2}^{2}\vert\mathcal{F}_{t}]\le12\mu_{0}+12\mu_{0}\|P_{t}\|_{2}^{2}+6\mu_{0}\|X_{t}\|_{2}^{2}\le36\mu_{0}+30\mu_{0}\|X_{t}\|_{2}^{2}.
\]
Therefore, to apply Lemma~\ref{lem:JJS} it suffices to prove that
the sequence $X_{t}$ (or equivalently, $P_{t}$) is bounded a.s.
The proof is quite similar to that of~\cite[Theorem~1]{tsitsiklis1994asynchronous}.
For completeness, we give the details as follows.

Let $\eps>0$ satisfy $\lambda(1+\eps)=1$, and
$\tilde{m}_t := 1 + \max_{s \le t}\|X_{s}\|_{2}$.
Define a sequence $m_t \in \mathcal{F}_{t}$ recursively:
$m_0 = 1$ and
\[
m_{t+1}=\bigg{\{}
\begin{aligned}
 & m_t, & & \text{if }\  \tilde{m}_{t+1} \le m_t (1+\eps),\\
 & \min\{ (1+\eps)^k : \tilde{m}_{t+1} \le (1+\eps)^k \}, & &  \text{if }\  \tilde{m}_{t+1} > m_t (1+\eps),
\end{aligned}
\]
where $k$ is an integer. 
Notice that $\|X_t\|_2\le m_t(1+\eps)$, and $\|X_t\|_2\le m_t$ if $m_t > m_{t-1}$.

Define $S_{t+1}=m_{t}^{-1}(\Psi_{t+1}(P_{t})-\Psi(P_{t})).$  Obviously, $\E[S_{t+1}|\mathcal{F}_t]=0$; and by above arguments, $\E[\|S_{t+1}\|_2^2|\mathcal{F}_t]<C$ a.s. for all $t$ and some constant $C>0$. Then it follows
from Lemma~\ref{lem:JJS} that the sequence $Z_{t}$ defined by
\[
Z_{t+1}=(1-\alpha_{t})Z_{t}+\alpha_{t}S_{t+1}
\]
converges to zero matrix, so there is a full probability set $\Omega'\subset\Omega$
and a random time $T(\omega)$ for each $\omega\in\Omega'$ such that
$\|Z_{t}(\omega)\|_{2}<\eps/2$ for all $t\ge T(\omega)$. 

Fix an $\omega\in\Omega'$. 
If $m_t(\omega) = m_{T}(\omega)$ for all $t>T(\omega)$, then $\|X_t(\omega)\|$ is bounded 
by $m_T(\omega) (1+\eps)$, the proof is so concluded.
Otherwise, there is a $\tau > T(\omega)$ such that $m_{{\tau}} > m_{{\tau}-1}$, 
thus $\|X_{{\tau}}\|_2 \le m_{{\tau}}$.
Define
$Z_{t}^{\tau}$ with $Z_{\tau}^{\tau}=O$ and
\[
Z_{t+1}^{\tau}=(1-\alpha_{t})Z_{t}^{\tau}+\alpha_{t}S_{t+1},\quad t=\tau,\tau+1,\dots
\]
Notice that for $t\ge\tau+1$,
\[
Z_{t}=\Big[\prod_{s=\tau}^{t-1}(1-\alpha_{s})\Big]Z_{\tau}+Z_{t}^{\tau},
\]
so $\|Z_{t}^{\tau}(\omega)\|_{2}\le\|Z_{\tau}(\omega)\|_{2}+\|Z_{t}(\omega)\|_{2}<\eps$.
Next, we prove by induction that 
\begin{equation}
X_{t}\le m_{\tau}(I_{d}+Z_{t}^{\tau})<m_{\tau}(1+\eps)I_{d},\quad t=\tau,\tau+1,\dots\label{eq:P-bound}
\end{equation}
This holds true when $t=\tau$. Assume that it holds for $\tau,\tau+1,\dots,t$.
Under this assumption, one knows that
$m_{\tau}=m_{\tau+1}=\cdots=m_{t}$.
Defining
\[
\hat{\Psi}(Q):=\E[\vA^{\tr}Q_{xx}\vA]\le\lambda\|Q\|_{2}I_{d},
\]
we compute (recalling that $\lambda(1+\eps)=1$)
\begin{align*}
X_{t+1} & =(1-\alpha_{t})X_{t}+\alpha_{t}\hat{\Psi}(X_{t})+\alpha_{t}m_{t}S_{t+1}\\
 & \le(1-\alpha_{t})m_{\tau}(I_{d}+Z_{t}^{\tau})+\alpha_{t}\lambda\|X_{t}\|_{2}I_{d}+\alpha_{t}m_{\tau}S_{t+1}\\
 & <(1-\alpha_{t})m_{\tau}(I_{d}+Z_{t}^{\tau})+\alpha_{t}\lambda m_{\tau}(1+\eps)I_{d}+\alpha_{t}m_{\tau}S_{t+1}\\
 & =(1-\alpha_{t})m_{\tau}(I_{d}+Z_{t}^{\tau})+\alpha_{t}m_{\tau}I_{d}+\alpha_{t}m_{\tau}S_{t+1}\\
 & =m_{\tau}[I_{d}+(1-\alpha_{t})Z_{t}^{\tau}+\alpha_{t}S_{t+1}]\\
 & =m_{\tau}(I_{d}+Z_{t+1}^{\tau}).
\end{align*}
Hence, (\ref{eq:P-bound}) holds true. This implies that the sequence
$X_{t}(\omega)$ is bounded. The proof is complete.
\end{proof}

The above transformation can also help us prove uniqueness of the
solution of ARE (\ref{eq:riccati}).
\begin{lem}
\label{lem:ARE-unique}ARE (\ref{eq:riccati}) has at most one solution.
\end{lem}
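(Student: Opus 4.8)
The plan is to exploit the change of variables constructed in the previous subsection, which turns a chosen solution of the ARE into the identity matrix and produces the contraction $\Psi(\cdot)$. It suffices to show that any two solutions $K_1,K_2$ of ARE \eqref{eq:riccati} satisfy $K_2\le K_1$; exchanging the roles of $K_1$ and $K_2$ then forces $K_1=K_2$, which is the claim.

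First I would rerun the transformation of the last subsection with $K=K_1$; this is legitimate because every solution is positive definite by \eqref{eq:K-posit}, so the matrices $L,M,C$ and the transformed data $\tilde{\vA},\tilde{\vN}$ exist, with $L^\tr K_1 L=I_n$ by construction, and we obtain the map $\tilde{\F}(\cdot)$ together with the contraction $\Psi(\cdot)\ge\tilde{\F}(\cdot)$ satisfying $\Psi(I_d)=I_d$. The next step is to show that the second solution $K_2$ is carried to a fixed point of $\tilde{\F}$. Setting $\hat{K}_2:=L^\tr K_2 L$ and using $\tilde{\vA}^\tr\hat{K}_2\tilde{\vA}=C^\tr\vA^\tr K_2\vA C$ together with the identity \eqref{eq:4.3-6}, a direct computation gives $\vpi(\E[\tilde{\vN}+\tilde{\vA}^\tr\hat{K}_2\tilde{\vA}])=L^\tr\vpi(\E[\vN+\vA^\tr K_2\vA])L=L^\tr K_2 L=\hat{K}_2$. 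Hence $Q_2:=\E[\tilde{\vN}+\tilde{\vA}^\tr\hat{K}_2\tilde{\vA}]$ satisfies $\hat{K}_2=\vpi(Q_2)$, and therefore $\tilde{\F}(Q_2)=\E[\tilde{\vN}+\tilde{\vA}^\tr\vpi(Q_2)\tilde{\vA}]=\E[\tilde{\vN}+\tilde{\vA}^\tr\hat{K}_2\tilde{\vA}]=Q_2$; that is, $Q_2$ is a fixed point of $\tilde{\F}$.

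I would then bound every fixed point of $\tilde{\F}$ from above by $I_d$. Since $\Psi(\cdot)\ge\tilde{\F}(\cdot)$, the fixed point obeys $Q_2=\tilde{\F}(Q_2)\le\Psi(Q_2)$; as $\Psi(\cdot)$ is affine and monotone, iterating produces an increasing sequence $Q_2\le\Psi(Q_2)\le\Psi^2(Q_2)\le\cdots$, and because $\Psi(\cdot)$ is a contraction in the matrix $2$-norm with fixed point $I_d$, this sequence converges to $I_d$, whence $Q_2\le I_d$. Applying the monotone map $\vpi(\cdot)$ (Lemma~\ref{lem:Pi_monot}) and noting $\vpi(I_d)=I_n$ yields $\hat{K}_2=\vpi(Q_2)\le I_n=L^\tr K_1 L$, i.e.\ $L^\tr K_2 L\le L^\tr K_1 L$; multiplying by $L^{-\tr}$ and $L^{-1}$ gives $K_2\le K_1$. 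By the symmetric argument (transforming instead with $K=K_2$) one obtains $K_1\le K_2$, so $K_1=K_2$.

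The main obstacle is the bookkeeping of the transformation: verifying that $K_2$ genuinely becomes a fixed point of $\tilde{\F}$ hinges on the conjugation identity \eqref{eq:4.3-6} and on $\vpi(I_d)=I_n$, while the comparison $Q_2\le I_d$ relies on combining the monotonicity of $\Psi(\cdot)$ with its contraction property, so that an increasing orbit is trapped below the unique fixed point $I_d$. Everything else is routine linear algebra.
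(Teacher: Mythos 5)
Your proof is correct, and it rests on the same machinery as the paper's --- the change of variables of Subsection~3.3 and the dominating affine contraction $\Psi(\cdot)$ --- but it closes the argument by a genuinely different route. The paper first takes the \emph{minimum} solution supplied by Proposition~\ref{prop:LQ-ARE}, transforms with respect to it so that $I_d$ becomes the minimum fixed point of $\tilde{\F}(\cdot)$, and then disposes of any other fixed point $\tilde{Q}$ in a single contraction estimate yielding $\|\tilde{Q}-I_d\|_2\le\lambda\|\tilde{Q}-I_d\|_2$; note that passing from the matrix inequality $\tilde{Q}-I_d\le\|\tilde{Q}-I_d\|_2\,\E[\tilde{\vA}^{\tr}\tilde{\vA}]$ to that norm inequality tacitly uses $\tilde{Q}\ge I_d$, i.e.\ precisely the minimality. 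You instead transform with respect to an \emph{arbitrary} solution $K_1$, check via \eqref{eq:4.3-6} that the other solution $K_2$ produces a fixed point $Q_2$ of $\tilde{\F}(\cdot)$, and obtain $Q_2\le I_d$ by trapping $Q_2$ below the increasing orbit $Q_2\le\Psi(Q_2)\le\Psi^{2}(Q_2)\le\cdots\to I_d$; Lemma~\ref{lem:Pi_monot} together with $\vpi(I_d)=I_n$ then gives $K_2\le K_1$, and swapping the roles of $K_1$ and $K_2$ finishes. The price of your version is running the transformation twice; what it buys is independence from the minimal-solution construction (whose minimality Proposition~\ref{prop:LQ-ARE} establishes under the well-posedness hypothesis rather than under mere solvability of \eqref{eq:riccati}), and it makes the order relation needed for the final comparison explicit rather than implicit. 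The supporting facts you invoke --- positive definiteness of every solution as in \eqref{eq:K-posit}, invertibility of $Q^{*}_{uu}$, and the monotonicity plus contractivity of $\Psi(\cdot)$ --- all hold, so the argument is sound.
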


\begin{proof}
If ARE (\ref{eq:riccati}) is solvable, then it has a minimum solution
$K$ in view of Proposition~\ref{prop:LQ-ARE}. Using this solution
to do the transformation above, then $I_{n}$ is the minimum solution
to the following equation for $\tilde{K}$:
\[
\tilde{K}=\Pi(\E[\tilde{\vN}+\tilde{\vA}^{\tr}\tilde{K}\tilde{\vA}]).
\]
Equivalently, $I_{d}$ is the minimum fixed point of $\tilde{\F}(\cdot)$.
Let $\tilde{Q}$ be any fixed point of $\tilde{\F}(\cdot)$, then
\begin{align*}
\tilde{Q}-I_{d} & =\tilde{\Phi}(\tilde{Q})-I_{d}\le\Psi(\tilde{Q})-I_{d}=\Psi(\tilde{Q})-\Psi(I_{d})\\
 & =\E[\vA^{\tr}(\tilde{Q}-I_{d})\vA]\le\|\tilde{Q}-I_{d}\|_{2}\,\E[\vA^{\tr}\vA].
\end{align*}
From (\ref{eq:lambda}) one has that
\[
\|\tilde{Q}-I_{d}\|_{2}\le\|\E[\vA^{\tr}\vA]\|_{2}\|\tilde{Q}-I_{d}\|_{2}\le\lambda\|\tilde{Q}-I_{d}\|_{2}.
\]
As $\lambda<1$, this means $\|\tilde{Q}-I_{d}\|_{2}=0$, so $\tilde{Q}=I_{d}$,
implying uniqueness of the fixed point of $\tilde{\F}(\cdot)$, and
furthermore, uniqueness of the solution of ARE (\ref{eq:riccati}).
The proof is complete.
\end{proof}

Let $K$ be the unique solution of ARE (\ref{eq:riccati}). Then it
follows from (\ref{eq:K<V-2}) and (\ref{eq:V<K}) that $V(x)=x^{\tr}Kx$
for all $x\in\R^{n}$, which proves property (1) in Theorem~\ref{thm:main}. 

\subsection{From boundedness of $Q_{t}$ to ARE}

Recall that $\E[N]\ge\eps_{0}I_{d}>O$. For each $\varepsilon\in[0,\varepsilon_{0}/2)$
, we define recursively a sequence of deterministic matrices:
\begin{equation}
\begin{aligned}L_{0}^{\varepsilon} & =O,\\
L_{k+1}^{\varepsilon} & =\F(L_{k}^{\varepsilon})-\eps I_{d},\quad k=0,1,2\dots
\end{aligned}
\label{eq:Leps-iter}
\end{equation}

\begin{lem}
\label{lem:Leps}$L_{k}^{\varepsilon}\le L_{k+1}^{\varepsilon}$ for
all $k=0,1,2,\dots$
\end{lem}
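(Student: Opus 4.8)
The plan is to prove the asserted chain of inequalities by induction on $k$, carrying along the auxiliary invariant that each iterate is positive semidefinite, so that the increasing map $\F(\cdot)$ can legitimately be applied to it. Concretely, I would establish the slightly stronger statement $O\le L_{k}^{\eps}\le L_{k+1}^{\eps}$ for every $k\ge0$: the second inequality is what the lemma claims, while the first keeps the iterates inside $\bS_+^d$, the domain on which $\F(\cdot)$ is defined and monotone (by Lemma~\ref{lem:Pi_monot} together with the earlier observation that $\F(\cdot)$ is increasing).

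For the base case $k=0$, I would first note that $\vpi(O)=O$ directly from the definition (\ref{eq:pi}), whence $L_{1}^{\eps}=\F(O)-\eps I_{d}=\E[\vN]-\eps I_{d}$. Since $\E[\vN]\ge\eps_{0}I_{d}$ and $\eps<\eps_{0}/2<\eps_{0}$, this gives $L_{1}^{\eps}\ge(\eps_{0}-\eps)I_{d}>O=L_{0}^{\eps}$, which establishes simultaneously $L_{0}^{\eps}\ge O$ and $L_{0}^{\eps}\le L_{1}^{\eps}$.

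For the inductive step, I would assume $O\le L_{k-1}^{\eps}\le L_{k}^{\eps}$ for some $k\ge1$. Then $L_{k-1}^{\eps},L_{k}^{\eps}\in\bS_+^d$, and because $\F(\cdot)$ is increasing we obtain $\F(L_{k-1}^{\eps})\le\F(L_{k}^{\eps})$; subtracting $\eps I_{d}$ from both sides yields precisely $L_{k}^{\eps}\le L_{k+1}^{\eps}$, and combined with the hypothesis $L_{k}^{\eps}\ge O$ this closes the induction. I do not anticipate any genuine obstacle: this is the standard monotone-iteration comparison, and the only points requiring a little care are the evaluation $\vpi(O)=O$ in the base case and the bookkeeping ensuring the iterates stay positive semidefinite, which is exactly what licenses the use of the monotonicity of $\F(\cdot)$ at each step.
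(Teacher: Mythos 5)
Your proof is correct and follows essentially the same route as the paper: the base case $L_{1}^{\eps}=\E[\vN]-\eps I_{d}>O=L_{0}^{\eps}$ followed by an induction using the monotonicity of $\F(\cdot)$. The only difference is that you explicitly carry the invariant $L_{k}^{\eps}\ge O$ to justify applying $\F(\cdot)$, a point the paper leaves implicit; this is a harmless and slightly more careful bookkeeping of the same argument.
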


\begin{proof}
It can be proved easily by induction. First, we have that $L_{1}^{\eps}=\F(O)-\eps I_{d}=\E[\vN]-\eps I_{d}>O=L_{0}^{\eps}$.
Assume that $L_{k-1}^{\eps}\le L_{k}^{\eps}$. Then from the monotonicity
of $\F(\cdot)$, one has
\[
L_{k+1}^{\varepsilon}=\F(L_{k}^{\varepsilon})-\eps I_{d}\ge\F(L_{k-1}^{\varepsilon})-\eps I_{d}=L_{k}^{\varepsilon}.
\]
The lemma is proved.
\end{proof}

\begin{lem}
\label{prop:comp}Define $\Theta:=\{\omega:\sup_{t}\|Q_{t}\|_{2}<\infty\}$.
If $\Pb(\Theta)>0$, then for each $\varepsilon\in(0,\varepsilon_{0}/2)$,
there is a sequence of increasing random times $t_{k}$ such that
for almost all $\omega\in\Theta$,
\begin{gather}
L_{k}^{\varepsilon}\le Q_{t}(\omega)\quad\forall t\geq t_{k}.\label{eq:4.9-2}
\end{gather}
\end{lem}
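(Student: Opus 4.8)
The plan is to establish the comparison inequality $L_k^\eps \le Q_t$ on $\Theta$ by induction on $k$, where at each level I run the Q-learning recursion forward in time and show that $Q_t$ is eventually trapped above the deterministic iterate $L_k^\eps$. The base case $k=0$ is trivial since $L_0^\eps = O \le Q_t$ for all $t$ (recall each $Q_t \in \bS_+^d$, so $t_0$ may be taken to be $0$). For the inductive step, I would assume that there is a random time $t_k$ with $L_k^\eps \le Q_t$ for all $t \ge t_k$ on $\Theta$, and aim to produce $t_{k+1} \ge t_k$ with $L_{k+1}^\eps \le Q_t$ for all $t \ge t_{k+1}$.

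First I would introduce an auxiliary comparison process. Fix $k$ and, starting from time $t_k$, define a process $(G_t)_{t \ge t_k}$ that shadows the Q-learning iteration but with the nonlinear term $\F_{t+1}(\cdot)$ evaluated at the frozen lower bound $L_k^\eps$ rather than at $Q_t$ itself:
\begin{equation}
G_{t_k} = L_k^\eps, \qquad G_{t+1} = (1-\alpha_t)G_t + \alpha_t \F_{t+1}(L_k^\eps).\label{eq:G-aux}
\end{equation}
By the monotonicity of $\F_{t+1}(\cdot)$ (Lemma~\ref{lem:Pi_monot}) and the inductive hypothesis $Q_t \ge L_k^\eps$, a straightforward induction using the Q-learning recursion~(\ref{eq:q-learning}) shows $Q_t \ge G_t$ for all $t \ge t_k$ on $\Theta$. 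Thus it suffices to prove that $G_t$ is eventually above $L_{k+1}^\eps = \F(L_k^\eps) - \eps I_d$.

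The crux is the convergence of $G_t$. Writing $w_{t+1} = \F_{t+1}(L_k^\eps) - \F(L_k^\eps)$, I note $\E[w_{t+1} \mid \mathcal{F}_t] = 0$ because $L_k^\eps$ is deterministic and $[\vA_{t+1}^\tr, \vN_{t+1}]$ is independent of $\mathcal{F}_t$ with $\F(L_k^\eps) = \E[\F_{t+1}(L_k^\eps)]$; moreover $\E[\|w_{t+1}\|_2^2 \mid \mathcal{F}_t]$ is bounded by a constant depending on $\mu_0$ and $\|L_k^\eps\|_2$ via~(\ref{eq:eps}). Hence $G_t - \F(L_k^\eps)$ satisfies the recursion of Lemma~\ref{lem:4.2} (with the stopping times witnessing $\Pb(\Theta) > 0$), so $G_t \to \F(L_k^\eps)$ a.s.\ on $\Theta$. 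Since $\F(L_k^\eps) = L_{k+1}^\eps + \eps I_d > L_{k+1}^\eps$, the convergence $G_t \to \F(L_k^\eps)$ guarantees that for almost every $\omega \in \Theta$ there is a time beyond which $G_t(\omega) \ge L_{k+1}^\eps$; I set $t_{k+1}$ to be the first such time (chosen $\ge t_k$ and measurable). Combining $Q_t \ge G_t \ge L_{k+1}^\eps$ closes the induction.

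The main obstacle is handling the boundedness of $(Q_t)$ correctly within the stochastic-approximation machinery: the event $\Theta$ has only positive probability, not full probability, so the convergence of $G_t$ cannot be quoted from an unconditional Dvoretzky-type theorem but must go through the stopping-time version in Lemma~\ref{lem:4.2}. I would therefore define the stopping times $\tau_j = \inf\{t : \|Q_t\|_2 > j\}$, verify that $\Pb(\sup_j \tau_j = \infty) \ge \Pb(\Theta) > 0$ and that the truncated martingale differences $\mathbf{1}_{\{t < \tau_j\}} w_{t+1}$ satisfy the conditional mean-zero and bounded-variance conditions of Lemma~\ref{lem:4.2}, which is where the second-moment hypothesis $\E[\|N\|_2^2 + \|\vA^\tr\vA\|_2^2] < \infty$ is essential. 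A secondary technical point is ensuring $t_{k+1}$ is a genuine (measurable) random time and that the countably many null sets discarded across all $k$ still leave $L_k^\eps \le Q_t$ holding simultaneously for almost all $\omega \in \Theta$; this is routine since the induction runs over countably many $k$.
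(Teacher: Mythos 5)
Your argument is correct, and it implements the comparison idea in a genuinely different way from the paper. The paper decomposes the actual iterate additively as $Q_{t}=Y_{t}+W_{t}$, where $Y_{t+1}=(1-\alpha_{t})Y_{t}+\alpha_{t}\F(Q_{t})$ carries the drift and $W_{t+1}=(1-\alpha_{t})W_{t}+\alpha_{t}\big(\F_{t+1}(Q_{t})-\F(Q_{t})\big)$ carries the noise; it then needs Lemma~\ref{lem:4.2} with the stopping times $\tau_{k}=\inf\{t:\|Q_{t}\|_{2}\ge k\}$ (because the noise depends on the possibly unbounded $Q_{t}$) to get $W_{t}\to O$ on $\Theta$, and Lemma~\ref{lem:det-iter} to push $Y_{t}$ above $\F(L_{k}^{\eps})-\tfrac12\eps I_{d}$, the two error budgets of $\tfrac12\eps$ combining into the $\eps I_{d}$ slack in the definition of $L_{k+1}^{\eps}$. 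You instead build, at each level $k$, a minorant process $G_{t}$ with the nonlinearity frozen at the deterministic matrix $L_{k}^{\eps}$, verify $Q_{t}\ge G_{t}$ by the same monotonicity of $\F_{t+1}(\cdot)$, and let $G_{t}$ converge exactly to $\F(L_{k}^{\eps})$. This buys you two simplifications: the martingale increments $\F_{t+1}(L_{k}^{\eps})-\F(L_{k}^{\eps})$ have a deterministic conditional second-moment bound, so the stopping-time truncation you invoke is actually unnecessary for this step (a plain Dvoretzky-type theorem suffices), and Lemma~\ref{lem:det-iter} is not needed at all since the full $\eps$ of slack is available at once. The price is that you must define a fresh auxiliary process started at the random time $t_{k}$ at every level; as you note, this is handled by conditioning on the countably many values of $t_{k}$, and the countable union of null sets across $k$ is harmless. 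Both proofs rest on the same two pillars --- monotonicity of $\F$ from Lemma~\ref{lem:Pi_monot} and a stochastic-approximation convergence statement --- so the difference is in the bookkeeping, with yours arguably the more economical at the cost of a per-level construction.
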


\begin{proof}
We rewrite iteration (\ref{eq:q-learning}) as
\begin{align*}
Q_{t+1} & =(1-\alpha_{t})Q_{t}+\alpha_{t}\F_{t+1}(Q_{t})\\
 & =(1-\alpha_{t})Q_{t}+\alpha_{t}(\F(Q_{t})+R_{t+1}(Q_{t})),
\end{align*}
where
\[
R_{t+1}(Q):=\F_{t+1}(Q)-\F(Q).
\]
Now we introduce a decomposition of above iteration. For $t=0,1,2,\dots$,
define 
\[
\begin{aligned}Y_{0} & =O,\\
Y_{t+1} & =(1-\alpha_{t})Y_{t}+\alpha_{t}\F(Q_{t});\\
W_{0} & =O,\\
W_{t+1} & =(1-\alpha_{t})W_{t}+\alpha_{t}R_{t+1}(Q_{t}).
\end{aligned}
\]
It is easy to know that $Q_{t}=Y_{t}+W_{t}$. As $N_{t+1}$ and $\vA_{t+1}$
are independent of $\mathcal{F}_{t}$, one can check that
\begin{align*}
\E[R_{t+1}(Q_{t})\vert\mathcal{F}_{t}] & =O,\\
\E[\|R_{t+1}(Q_{t})\|_{2}^{2}\vert\mathcal{F}_{t}] & \le c(\mu_{0})(1+\|Q_{t}\|_{2}^{2}),
\end{align*}
where $c(\mu_{0})>0$ is a constant depending only on $\mu_{0}$ defined
in (\ref{eq:eps}). Applying Lemma~\ref{lem:4.2} with 
\[
\tau_{k}:=\inf\{t:\|Q_{t}\|_2\ge k\},
\]
one has that
\begin{equation}
\lim_{t\to\infty}W_{t}=O\quad\text{on }\Theta\ \text{ a.s.}\label{eq:4.9-3}
\end{equation}

We show (\ref{eq:4.9-2}) by induction. Obviously, $L_{0}^{\varepsilon}=O\le Q_{t}$
for all $t\geq0$. Now we suppose that $L_{k}^{\varepsilon}\le Q_{t}$
holds for all $t\geq t_{k}$ and some random time $t_{k}$, we
shall prove that, there is a random time $t_{k+1}$ such that
$L_{k+1}^{\varepsilon}\le Q_{t}$ for $t\geq t_{k+1}$.

Actually, from $L_{k}^{\varepsilon}\le Q_{t}$ and the monotonicity
of $\F(\cdot)$, we know $\F(L_{k}^{\varepsilon})\leq\F(Q_{t})$.
From Lemma~\ref{lem:det-iter}, there is a random time $t_{k+1}'\geq t_{k}$,
such that 
\begin{equation}
Y_{t}\ge\F(L_{k}^{\varepsilon})-\frac{1}{2}\varepsilon I_{d}\quad\forall t\geq t_{k+1}'.\label{eq:4.9-4}
\end{equation}
According to (\ref{eq:4.9-3}), there is another time $t_{k+1}''$
such that for almost all $\omega\in\Theta$,
\begin{equation}
-\frac{1}{2}\varepsilon I_{d}\leq W_{t}(\omega)\leq\frac{1}{2}\varepsilon I_{d}\quad\forall t\geq t_{k+1}''.\label{eq:4.9-5}
\end{equation}
Combining (\ref{eq:4.9-4}) and (\ref{eq:4.9-5}), and letting $t_{k+1}=\max\{t_{k+1}',t_{k+1}''\}$,
one knows that for almost all $\omega\in\Theta$,
\[
Q_{t}(\omega)=Y_{t}+W_{t}(\omega)\ge\F(L_{k}^{\varepsilon})-\varepsilon I_{d}=L_{k+1}^{\varepsilon}\quad\forall t\geq t_{k+1}.
\]
The proof is complete.
\end{proof}

Now let statement (c) in Theorem~\ref{thm:main} be valid. It follows
from Lemmas~\ref{lem:Leps} and~\ref{prop:comp} that the sequence
$L_{k}^{\eps}$ is increasing, and bounded uniformly with respect
to $k$ and $\eps\in(0,\eps_{0}/2)$, so there are $Q^{\eps}$, uniformly
bounded in $\eps$, such that
\[
\lim_{k\to\infty}L_{k}^{\eps}=Q^{\eps}.
\]
Noticing that $\Phi(\cdot)$ is continuous in the set of all positive
definite $d\times d$ matrices, the above relation along with (\ref{eq:Leps-iter})
implies
\begin{equation}
Q^{\eps}=\F(Q^{\eps})-\eps I_{d}.\label{eq:Q*-eq}
\end{equation}
Moreover, $Q^{\eps}\ge\E[N]-\eps I_{d}\ge\frac{1}{2}\eps_{0}I_{d}$.
So by means of the Bolzano--Weierstrass theorem and the continuity
of $\F(\cdot)$, there is a subsequence of $Q^{\eps}$ converging
to a positive definite matrix, denoted by $Q^{0}$, that satisfies
\[
Q^{0}=\F(Q^{0})=\E[N+\vA^{\tr}\vpi(Q^{0})\vA].
\]
Applying $\vpi(\cdot)$ on both sides, one obtains that $K=\vpi(Q^{0})$
satisfies ARE (\ref{eq:riccati}). This also means that $Q^{0}$ obtained
here, as the fixed point of $\Phi(\cdot)$, is exactly $Q^{*}$ defined
in (\ref{eq:Q*}). 
\begin{rem}
\label{rem:relax}The condition $\E[N]>O$ can be weakened to $\Phi(\E[N])>O$.
Technically, this condition is only used to prove $K>O$ (or equivalently,
$Q^{*}>O$) in (\ref{eq:K-posit}), and to prove Lemma~\ref{prop:comp};
actually, the condition $\Phi(\E[N])>O$ can also ensure these two results. The first one
is straightforward: $Q^{*}=\F(Q^{*})\ge\E[N]$ implies $Q^{*}\ge\Phi(\E[N])>O$
due to the monotonicity of $\F(\cdot)$. To obtain a similar result
as Lemma~\ref{prop:comp}, we define a new lower bound sequence for
each $\eps\in(0,1)$:
\[
L_{0}^{\eps}=(1-\eps)\E[N],\quad L_{k+1}^{\eps}=(1-\eps)\Phi(L_{k}).
\]
One can prove by induction that $L_{k}^{\eps}$ is an increasing sequence
and 
\begin{align*}
\F(L_{k}^{\varepsilon}) & \ge\F(L_{0}^{\varepsilon})=\F((1-\eps)\E[N])\ge(1-\eps)\Phi(\E[N])>O.
\end{align*}
Then one can also obtain the conclusion of Lemma~\ref{prop:comp}
by repeating its proof with two slight changes: i) $L_{0}^{\eps}\le Q_{t}$
a.s. for $t\ge t_{0}$ with some random time $t_{0}$, and ii) (\ref{eq:4.9-4})
and (\ref{eq:4.9-5}) replaced by
\begin{align*}
Y_{t}\ge\F(L_{k}^{\varepsilon})-\frac{1}{2}\varepsilon\F(L_{k}^{\varepsilon}) & \quad\forall t\geq t_{k+1}',\\
-\frac{1}{2}\varepsilon\F(L_{k}^{\varepsilon})\leq W_{t}(\omega)\leq\frac{1}{2}\varepsilon\F(L_{k}^{\varepsilon}) & \quad\forall t\geq t_{k+1}''.
\end{align*}
To see the existence of $t_{0}$, one can introduce a sequence $M_{t}$
with $M_{0}=O$ and $M_{t+1}=(1-\alpha_{t})M_{t}+\alpha_{t}N_{t+1}$.
It is easily seen that $M_{t}\le Q_{t}$ for all $t$ and $M_{t}\to\E[N]$
a.s.; since $\E[N]\ge O$, this implies that there is a time $t_{0}(\omega)$
for almost every $\omega$ such that $M_{t}(\omega)\ge(1-\eps)\E[N]$
for all $t\ge t_{0}(\omega)$, so $Q_{t}(\omega)\ge L_{0}^{\eps}$
a.s. for all $t\ge t_{0}(\omega)$.
\end{rem}

\subsection{From ARE to convergence of $Q_{t}$}

Assume that ARE (\ref{eq:riccati}) has a solution $K$. In Subsection
3.3 we have constructed a convergent sequence $P_{t}$ that dominates
the sequence $\tilde{Q}_{t}=C^{\tr}Q_{t}C$, which means
\[
Q_{t}\le(C^{-1})^{\tr}P_{t}C^{-1}=:\bar{P_{t}}\xrightarrow{\text{a.s.}}(C^{-1})^{\tr}I_{d}C^{-1}=Q^{*},
\]
where $Q^{*}$ is defined in (\ref{eq:Q*}). So the sequence $Q_{t}$
is bounded a.s., and from Lemma~\ref{prop:comp}, there is
a sequence of increasing random times $t_{k}$ such that
\[
L_{k}^{\varepsilon}\le Q_{t}\quad\text{a.s., }\forall t\geq t_{k},
\]
where $L_{k}^{\eps}$ is defined in (\ref{eq:Leps-iter}) with $\eps\in(0,\eps_{0}/2)$.
Now for any $z\in\R^{d}$ and all $k\geq0$, it holds a.s.
that
\begin{gather*}
\limsup_{t\to\infty}z^{\tr}Q_{t}z\leq\lim_{t\to\infty}z^{\tr}\bar{P}_{t}z=z^{\tr}Q^{*}z,\\
\liminf_{t\to\infty}z^{\tr}Q_{t}z\geq\lim_{k\to\infty}z^{\tr}L_{k}^{\varepsilon}z=z^{\tr}Q^{\eps}z,
\end{gather*}
where $Q^{\eps}$ is the limit of $L_{k}^{\eps}$ and satisfies (\ref{eq:Q*-eq}).
Moreover, it has been proved in the last subsection that there is
a subsequence of $Q^{\eps}$ converging to $Q^{*}$. Therefore, 
\[
\lim_{t\to\infty}z^{\tr}Q_{t}z=z^{\tr}Q^{*}z\quad\text{a.s.}
\]
So we can conclude that $Q_{t}$ converges a.s. to a positive definite
matrix $Q^{\star}$ which coincides $Q^{*}=\E[N+\vA^{\tr}K\vA]$. 
\begin{rem}
Although $Q^{\star}$ and $Q^{*}$ are eventually the same, they do
come from different sources: $Q^{\star}$ emerges as the limit of
$Q_{t}$, while $Q^{*}$ is defined via the solution of ARE, or equivalently,
as the fixed point of $\F(\cdot)$.
\end{rem}

To conclude the whole proof of Theorem~\ref{thm:main}, it remains
to verify properties (2)--(4). If $Q_{t}$ converges a.s. to $Q^{\star}$,
then $Q^{\star}=Q^{*}$ is the (unique) fixed point of $\Phi(\cdot)$,
and $K=\Pi(Q^{\star})$ is the unique solution of ARE (\ref{eq:riccati}),
so properties (2) and (4) is proved. In view of Remark~\ref{rem:optimal},
one knows that the optimal control has a  feedback form $u_{t}=\Gamma(Q^{\star})x_{t}$,
which proves property (3).

\section{Proof of Theorem~\ref{thm:stab}}

It has been proved that if LQ problem (\ref{eq:system})--(\ref{eq:cost})
is well-posed, then ARE (\ref{eq:riccati}) has a unique solution
$K$ and the Q-learning process $Q_{t}$ converges to $Q^{\star}$
a.s. By means of the transformation introduced in Subsection 3.3,
we can reformulate the LQ problem into an equivalent form, for which
the solution of ARE and the limit of Q-learning are both identity
matrices. For this reason, we may directly assume, without loss of
generality, that $K=I_{n}$ and $Q^{\star}=I_{d}$. In this case,
the coefficient of optimal feedback control is $\Gamma(I_{d})=O$ (see \eqref{eq:pi} for the definition of  $\Gamma(\cdot)$),
and $\lambda:=\|\E[\vA^{\tr}\vA]\|_{2}<1$. 

Fix a number $\gamma\in(\lambda,1)$. Using the notation (\ref{eq:systems}),
the system under adaptive feedback control $u_{t}^{\mathrm{a}}=\varGamma_{t}x_{t}=\Gamma(Q_{t})x_{t}$
evolves as
\[
x_{t+1}^{\mathrm{a}}=(A_{t+1}+B_{t+1}\varGamma_{t})x_{t}^{\mathrm{a}}=:\bar{A}_{t+1}x_{t}^{\mathrm{a}}.
\]
Since $|x_{t}^{\mathrm{a}}|^{2}$ may not be integrable, we let $\Xi_{t}\subset\Omega$
be an $\mathcal{F}_{t}$-measurable set such that $\E[\mathbf{1}_{\Xi_{t}}|x_{t}^{\mathrm{a}}|^{2}]<\infty$,
and compute
\begin{align*}
\E[\mathbf{1}_{\Xi_{t}}|x_{t+1}^{\mathrm{a}}|^{2}|\mathcal{F}_{t}] & =\E[\mathbf{1}_{\Xi_{t}}|\bar{A}_{t+1}x_{t}^{\mathrm{a}}|^{2}|\mathcal{F}_{t}]\\
 & =\E[\mathbf{1}_{\Xi_{t}}(x_{t}^{\mathrm{a}})^{\tr}\bar{A}_{t+1}^{\tr}\bar{A}_{t+1}x_{t}^{\mathrm{a}}|\mathcal{F}_{t}]\\
 & =(x_{t}^{\mathrm{a}})^{\tr}\E[\mathbf{1}_{\Xi_{t}}\bar{A}_{t+1}^{\tr}\bar{A}_{t+1}|\mathcal{F}_{t}]x_{t}^{\mathrm{a}}\\
 & \leq\mathbf{1}_{\Xi_{t}}\|\E[\mathbf{1}_{\Xi_{t}}\bar{A}_{t+1}^{\tr}\bar{A}_{t+1}|\mathcal{F}_{t}]\|_{2}|x_{t}^{\mathrm{a}}|^{2}.
\end{align*}
Denote $\varPsi_{t}:=\E[\mathbf{1}_{\Xi_{t}}\bar{A}_{t+1}^{\tr}\bar{A}_{t+1}|\mathcal{F}_{t}]$;
as $A_{t+1},B_{t+1}$ are independent of $\mathcal{F}_{t}$, one has
\begin{align*}
\varPsi_{t} & \le\E[A_{t+1}^{\tr}A_{t+1}]+\mathbf{1}_{\Xi_{t}}(\varGamma_{t}^{\tr}\E[B_{t+1}^{\tr}A_{t+1}]+\E[A_{t+1}^{\tr}B_{t+1}]\varGamma_{t}+\varGamma_{t}^{\tr}\E[B_{t+1}^{\tr}B_{t+1}]\varGamma_{t}).
\end{align*}
From the continuity of $\Gamma(\cdot)$ around the identity matrix,
there is a constant $\delta>0$, independent of $t$, such that, as
long as $\|Q_{t}-I_{d}\|_{2}<\delta$, one has that
\[
\|\varGamma_{t}\|_{2}<(\gamma-\lambda)(1+\|\E[B_{t+1}^{\tr}A_{t+1}]\|_{2}+\|\E[A_{t+1}^{\tr}B_{t+1}]\|_{2}+\|\E[B_{t+1}^{\tr}B_{t+1}]\|_{2})^{-1},
\]
and so $\|\varPsi_{t}\|_{2}<\gamma$. Defining, for any $t,s$ with $s>t$,
\[
\Theta_{t}:=\{\omega:\|Q_{t}(\omega)-I_{d}\|_{2}\geq\delta\},\quad\Theta_{t}^{s}:=\cup_{r=t}^{s}\Theta_{r},
\]
we have
\[
\E[\mathbf{1}_{\Xi_{t}\backslash\Theta_{t}}|x_{t+1}^{\mathrm{a}}|^{2}|\mathcal{F}_{t}]\le\gamma\mathbf{1}_{\Xi_{t}\backslash\Theta_{t}}|x_{t}^{\mathrm{a}}|^{2},
\]
and inductively,
\begin{equation}
\begin{alignedat}{1}\E[\mathbf{1}_{\Xi_{t}\backslash\Theta_{t}}|x_{t}^{\mathrm{a}}|^{2}] & \ge\gamma^{-1}\E[\mathbf{1}_{\Xi_{t}\backslash\Theta_{t}}|x_{t+1}^{\mathrm{a}}|^{2}]\\
 & \ge\gamma^{-1}\E[\mathbf{1}_{\Xi_{t}\backslash\Theta_{t}^{t+1}}|x_{t+1}^{\mathrm{a}}|^{2}]\ge\gamma^{-2}\E[\mathbf{1}_{\Xi_{t}\backslash\Theta_{t}^{t+1}}|x_{t+2}^{\mathrm{a}}|^{2}]\\
 & \ge\cdots\ge\gamma^{t-s}\E[\mathbf{1}_{\Xi_{t}\backslash\Theta_{t}^{s-1}}|x_{s}^{\mathrm{a}}|^{2}]\\
 & \ge\gamma^{t-s}\E[\mathbf{1}_{\Xi_{t}\backslash\Theta_{t}^{\infty}}|x_{s}^{\mathrm{a}}|^{2}].
\end{alignedat}
\label{eq:x-a-iter}
\end{equation}

Let $\eps$ be an arbitrary positive number. As $Q_{t}$ converges
to $I_{d}$, there is a random time $\tau$ such that $\Pb(\Theta_{\tau}^{\infty})<\eps/2$.
As $x_{\tau}^{\mathrm{a}}$ is finite, there is a set $\Xi_{\tau}\in\mathcal{F}_{\tau}$
with $\Pb(\Xi_{\tau})>1-\eps/2$ such that $\E[\mathbf{1}_{\Xi_{\tau}}|x_{\tau}^{\mathrm{a}}|^{2}]<\infty$.
From (\ref{eq:x-a-iter}) one has 
\begin{equation}
\E[\mathbf{1}_{\Xi_{\tau}\backslash\Theta_{\tau}^{\infty}}|x_{t}^{\mathrm{a}}|^{2}]\le\E[\mathbf{1}_{\Xi_{\tau}\backslash\Theta_{\tau}^{t}}|x_{t}^{\mathrm{a}}|^{2}]\le\gamma^{t-\tau}\E[\mathbf{1}_{\Xi_{\tau}\backslash\Theta_{\tau}}|x_{\tau}^{\mathrm{a}}|^{2}]\quad\forall\,t>\tau.\label{eq:4-003}
\end{equation}
Set $\Omega_{\eps}=\Xi_{\tau}\backslash\Theta_{\tau}^{\infty}$ and
$c_{\eps}=\E[\mathbf{1}_{\Omega_{\eps}}|x_{\tau}^{\mathrm{a}}|^{2}]<\infty$;
clearly, 
\[
\Pb(\Omega_{\eps})\ge\Pb(\Xi_{\tau})-\Pb(\Theta_{\tau}^{\infty})>1-\eps.
\]
Sum up (\ref{eq:4-003}) with respect to $t$:
\begin{equation}
\E\sum_{t=\tau}^{\infty}\mathbf{1}_{\Xi_{\tau}\backslash\Theta_{\tau}^{t}}|x_{t}^{\mathrm{a}}|^{2}<c_{\eps}\sum_{t=\tau}^{\infty}\gamma^{t-\tau}=\frac{c_{\eps}}{1-\gamma}<\infty.\label{eq:4-001}
\end{equation}
As $\tilde{c}:=\sup_{t\ge\tau}\mathbf{1}_{\Omega_{\eps}}\|\varGamma_{t}\|_{2}$
is bounded, one has
\begin{equation}
\E\sum_{t=\tau}^{\infty}\mathbf{1}_{\Xi_{\tau}\backslash\Theta_{\tau}^{t}}|u_{t}^{\mathrm{a}}|^{2}
=\E\sum_{t=\tau}^{\infty}\mathbf{1}_{\Xi_{\tau}\backslash\Theta_{\tau}^{t}}|\varGamma_{t}x_{t}^{\mathrm{a}}|^{2}
\le\tilde{c}\,\E\sum_{t=\tau}^{\infty}\mathbf{1}_{\Xi_{\tau}\backslash\Theta_{\tau}^{t}}|x_{t}^{\mathrm{a}}|^{2}
<\infty.
\label{eq:4-002}
\end{equation}
Moreover, for $t\ge\tau$ one obtains
\[
\E\bigg\{\mathbf{1}_{\Xi_{\tau}\backslash\Theta_{\tau}^{t}}\begin{bmatrix}x_{t}^{\mathrm{a}}\\
u_{t}^{\mathrm{a}}
\end{bmatrix}^{\tr}\vN_{t+1}\begin{bmatrix}x_{t}^{\mathrm{a}}\\
u_{t}^{\mathrm{a}}
\end{bmatrix}\bigg\}\le\|\E[N]\|_{2}\E[\mathbf{1}_{\Xi_{\tau}\backslash\Theta_{\tau}^{t}}(|x_{t}^{\mathrm{a}}|^{2}+|u_{t}^{\mathrm{a}}|^{2})],
\]
which along with (\ref{eq:4-001}) and (\ref{eq:4-002}) implies that
\[
\E\sum_{t=\tau}^{\infty}\mathbf{1}_{\Xi_{\tau}\backslash\Theta_{\tau}^{t}}\begin{bmatrix}x_{t}^{\mathrm{a}}\\
u_{t}^{\mathrm{a}}
\end{bmatrix}^{\tr}\vN_{t+1}\begin{bmatrix}x_{t}^{\mathrm{a}}\\
u_{t}^{\mathrm{a}}
\end{bmatrix}<\infty.
\]
Noticing that $\Omega_{\eps}\subset\Xi_{\tau}\backslash\Theta_{\tau}^{t}$,
one can obtain from the above argument that
\[
\E\bigg\{\mathbf{1}_{\Omega_{\eps}}\sum_{t=\tau}^{\infty}\bigg(|x_{t}^{\mathrm{a}}|^{2}+|u_{t}^{\mathrm{a}}|^{2}+\begin{bmatrix}x_{t}^{\mathrm{a}}\\
u_{t}^{\mathrm{a}}
\end{bmatrix}^{\tr}\vN_{t+1}\begin{bmatrix}x_{t}^{\mathrm{a}}\\
u_{t}^{\mathrm{a}}
\end{bmatrix}\bigg)\bigg\}<\infty,
\]
and consequently, 
\[
J(x,u_{\cdot}^{\mathrm{a}})+\sum_{t=0}^{\infty}\big(|x_{t}^{\mathrm{a}}|^{2}
+|u_{t}^{\mathrm{a}}|^{2}\big)<\infty\quad\text{on }\ \Omega_{\eps}\ \text{ a.s.}
\]
This concludes the proof of Theorem~\ref{thm:stab} due to the arbitrariness
of $\eps$.

\section{Numerical experiments and discussion}

In this section we illustrate our main results with some examples.

\subsection{Learning rates}

The learning rates $\alpha_{t}$ in Algorithm~\ref{alg:Q-learning}
are superparameters, of which the choice heavily depends on the specific
problem and may affect the speed and accuracy of the algorithm dramatically.

As an example, let us consider LQ problem (\ref{eq:system})--(\ref{eq:cost})
with $n=2$, $m=1$, and
\begin{equation}
\vA_{t}=\vA^{(0)}+w_{t}^{(1)}\vA^{(1)}+w_{t}^{(2)}\vA^{(2)},\quad N_{t}=N,\label{eq:eg-01}
\end{equation}
where $w_{t}^{(1)},w_{t}^{(2)}\sim\mathcal{N}(0,1)$ are independent
random variables, and
\[
\begin{aligned}\vA^{(0)}=\begin{bmatrix}-1 & -0.1 & -0.2\\
2.6 & 0.5 & 0.5
\end{bmatrix} & ,\quad\vA^{(1)}=\begin{bmatrix}0.6 & 0.075 & 0.125\\
-0.8 & 0.1 & -0.375
\end{bmatrix},\\
\vA^{(2)}=\begin{bmatrix}-0.06 & -0.06 & 0.02\\
0.2 & 0.23 & -0.09
\end{bmatrix} & ,\quad N=\begin{bmatrix}3.11 & 1.5626 & -0.2798\\
1.5626 & 1.816175 & -1.021425\\
-0.2798 & -1.021425 & 0.91585
\end{bmatrix}.
\end{aligned}
\]
In this case, the fixed point of the mapping 
\[
\F(Q)=\E[N+\vA_{t}^{\tr}\Pi(Q)\vA_{t}]=N+\sum_{i=0}^{2}(\vA^{(i)})^{\tr}\Pi(Q)\vA^{(i)}
\]
can be solved out explicitly, i.e., $Q^{*}=\F(Q^{*})$ with
\[
Q^{*}=\begin{bmatrix}5 & 2 & 0\\
2 & 2 & -1\\
0 & -1 & 1
\end{bmatrix}.
\]
We apply Algorithm~\ref{alg:Q-learning} to this problem with various
choices of the learning rates:
\[
\alpha_{t}^{(1)}=\frac{1}{t+1},\quad\alpha_{t}^{(2)}=\frac{2}{t+2},\quad\alpha_{t}^{(3)}=\frac{10}{t+10},
\]
and compare the errors $\|Q_{t}-Q^{*}\|_{1}$ within $2000$ time
steps. 
Here we use the 1-norm rather than the 2-norm for reducing the computational cost.

Repeated simulations show that, although the process $Q_{t}$ is convergent
in all three cases, the choice $\alpha_{t}^{(2)}$ has the best overall
performance among the three: the learning process with the lower rate
$\alpha_{t}^{(1)}$ is stable but converges slowly, and with the higher
rate $\alpha_{t}^{(3)}$ it is a bit too fluctuant and unstable; the
moderate rate $\alpha_{t}^{(2)}$ makes a satisfactory balance between
speed and accuracy. Figure~\ref{fig:eg1} gives a sample of the comparative experiment.

\begin{figure}[!tb]
\centering
\includegraphics[width=0.8\textwidth]{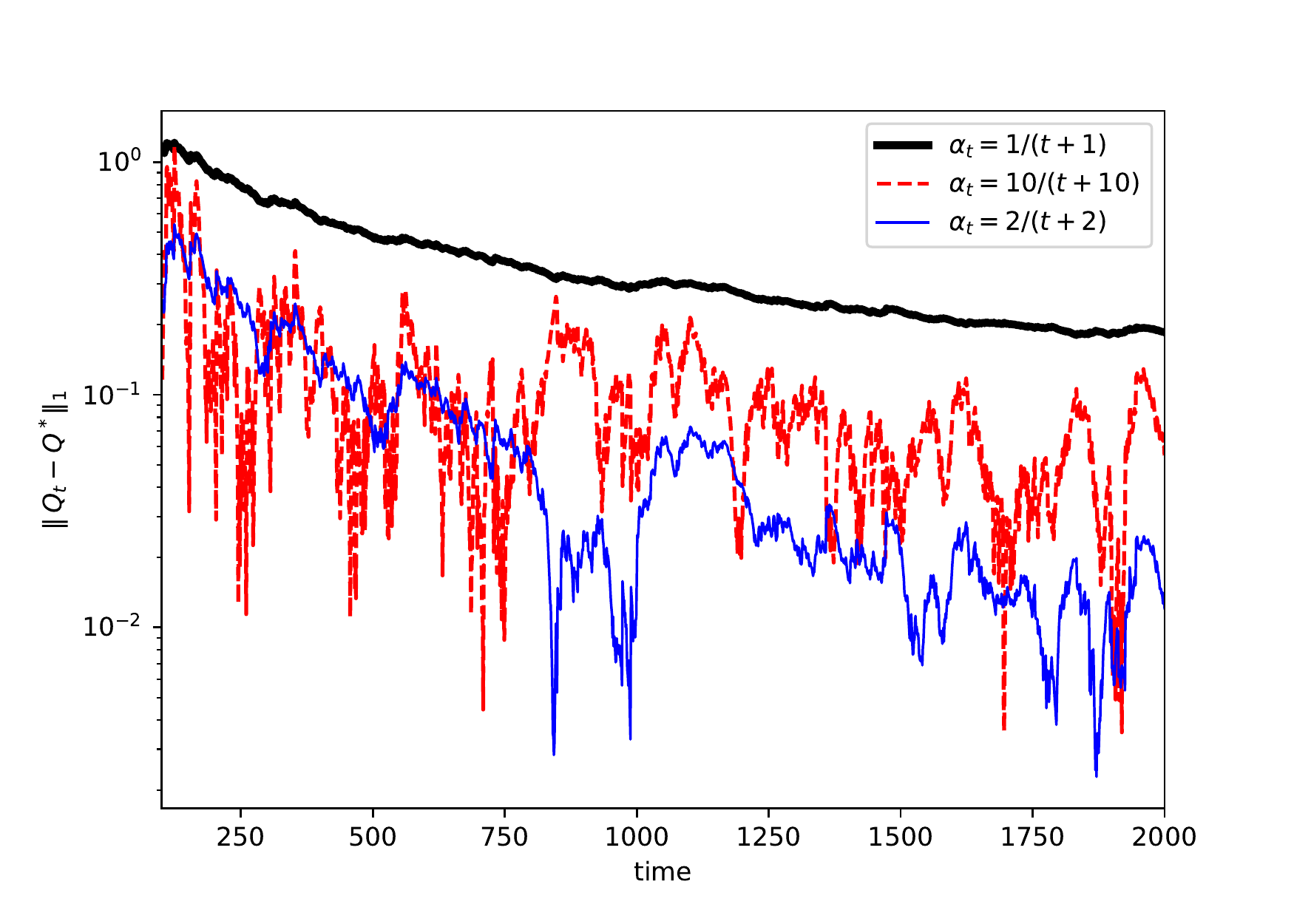}
\caption{Performance comparison with different learning rates.}
\label{fig:eg1}
\end{figure}

\subsection{Discounted problems}

The LQ problem with discounting is very common in applications, in
which the cost function (\ref{eq:cost}) is usually replaced by
\[
J(x,u_{\cdot})=\sum_{t=0}^{\infty}\rho^{2t}\big[x_{t}^{\tr},u_{t}^{\tr}\big]\vN_{t+1}\begin{bmatrix}x_{t}\\
u_{t}
\end{bmatrix}
\]
with a discounted rate $\rho>0$. With a transformation $x_{t}\mapsto\rho^{-t}x_{t}$,
the discounted problem can reduced to our formulation with $\rho\vA_{t}$
instead of $\vA_{t}$, i.e., 
\begin{equation}
x_{t+1}=\rho\vA_{t+1}\begin{bmatrix}x_{t}\\
u_{t}
\end{bmatrix},\quad t=0,1,2,\dots,\label{eq:discount-x}
\end{equation}
subject to the cost function (\ref{eq:cost}).

Evidently, the well-posedness of the discounted problems depends on
the value of the discounted rate $\rho$. Kalman~\cite{kalman1961control}
indicated that there is a critical point $\rho_{\max}>0$ such that
the discounted problems is well-posedness if and only if $\rho<\rho_{\max}$.
He also mentioned that how to determine $\rho_{\max}$ in a general
problem seemed to be very difficult. 

To examine the performance of Algorithm~\ref{alg:Q-learning} around
the critical point, we consider LQ problem (\ref{eq:discount-x})--(\ref{eq:cost})
with $\vA_{t}$ and $N_{t}$ defined in (\ref{eq:eg-01}). A direct
computation gives $\rho_{\max}\approx2.31827$ in this problem. We
apply Algorithm~\ref{alg:Q-learning} for two discounted rates, $\rho_{1}=2.25$
and $\rho_{2}=2.4$, where are very close to $\rho_{\max}$. By means
of Theorem~\ref{thm:main}, the Q-learning process $Q_{t}$ converges
a.s. for $\rho_{1}=2.25$ and diverges a.s. for $\rho_{2}=2.4$. Numerical
simulations have well demonstrated the theoretical result (see Figure~\ref{fig:eg2}). 
The values closer to $\rho_{\max}$ than $\rho_{1}$ and $\rho_{2}$
may also be tested for this purpose, but when they are too near the
critical point, the systematic computation errors would affect the
performance substantially.

\begin{figure}[!tb]
\centering
\includegraphics[width=0.8\textwidth]{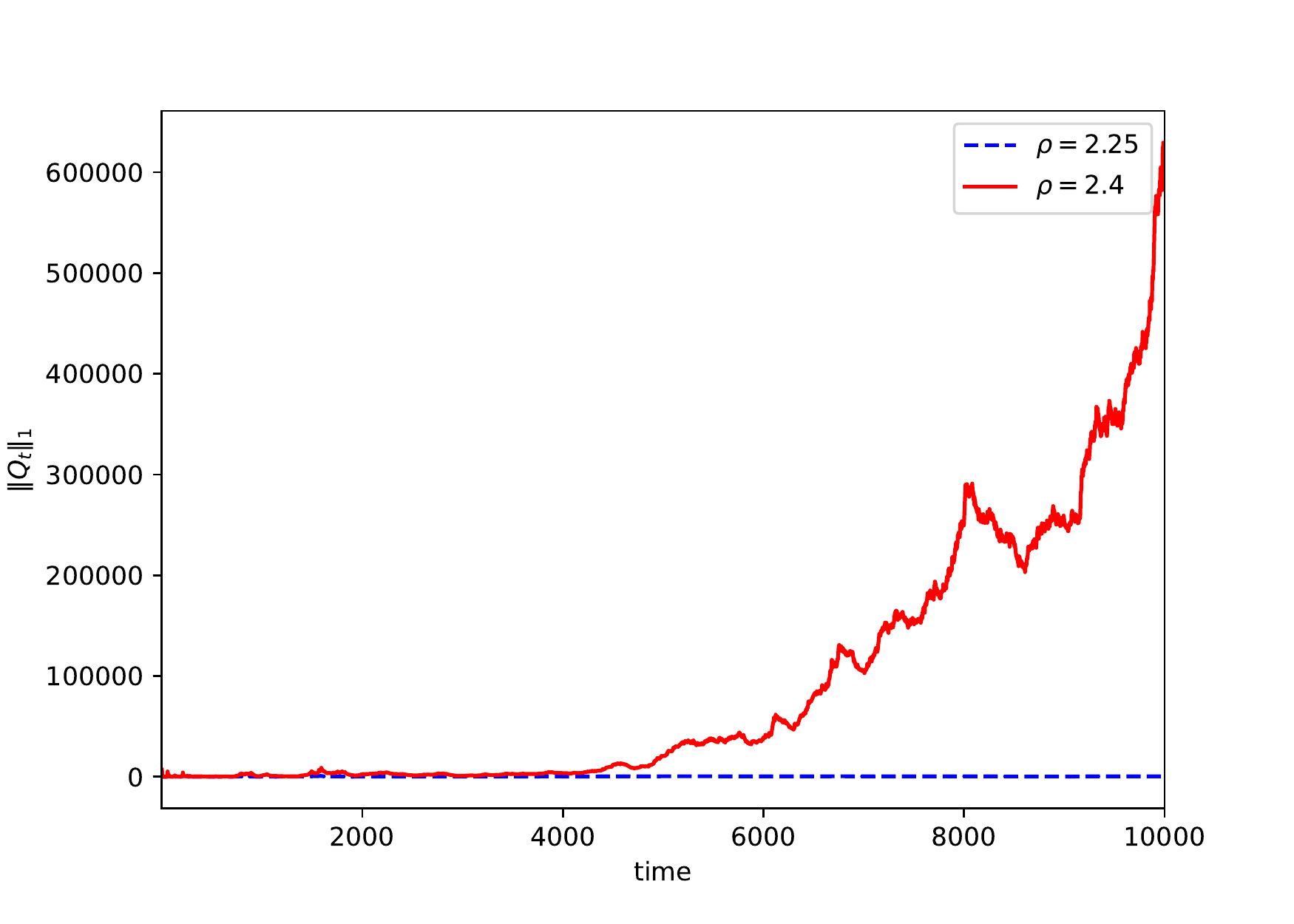}
\caption{Comparison of Q-learning processes for two discounted rates 
around the critical value.}
\label{fig:eg2}
\end{figure}

\subsection{Stabilization}

As far as the stabilization problem is concerned, the cost function
is often set as
\begin{equation}
J(x,u_{\cdot})=\sum_{t=0}^{\infty} |x_{t}|^{2}.\label{eq:cost-stab}
\end{equation}
For discrete-time linear systems with random parameters, this problem
was first discussed by Kalman~\cite{kalman1961control}. Embedded
to our framework, the parameter $N_{t}$ equals $\mathrm{diag}(I_{n},O)$
in this case, which is not positive definite. Nevertheless, in view
of Remark~\ref{rem:relax}, Theorem~\ref{thm:main} can still apply
if $\F(\E[N])$ is positive definite. For the system
\begin{equation}
x_{t+1}=A_{t+1}x_{t}+B_{t+1}u_{t},\label{eq:sys-AB}
\end{equation}
one has that
\[
\F(\E[N])=N+\E\begin{bmatrix}A_{t}^{\tr}\\
B_{t}^{\tr}
\end{bmatrix}\Pi(N)[A_{t},\,B_{t}]=\begin{bmatrix}I_{n}+\E[A_{t}^{\tr}A_{t}] & \E[A_{t}^{\tr}B_{t}]\\
\E[B_{t}^{\tr}A_{t}] & \E[B_{t}^{\tr}B_{t}]
\end{bmatrix}.
\]
To check the condition we may need further information of the parameters.

Let us give a numerical example. Consider LQ problem (\ref{eq:sys-AB})--(\ref{eq:cost-stab})
with $n=2$, $m=1$, and
\begin{align*}
A_{t} & =\me^{w_{t}^{(1)}w_{t}^{(2)}}A^{(1)}-(\sin w_{t}^{(2)})A^{(2)}-\sqrt{w_{t}^{(2)}+w_{t}^{(3)}}A^{(3)},\\
B_{t} & =(w_{t}^{(4)} - w_{t}^{(5)})B_{t}^{(1)}+(\cos w_{t}^{(4)})B_{t}^{(2)},
\end{align*}
where $w_{t}^{(1)},\dots,w_{t}^{(5)}\sim\mathcal{U}(0,1)$ are independent,
and
\begin{align*}
[A^{(1)}, A^{(2)},  A^{(3)},  B^{(1)}, B^{(2)}] & =\rho\begin{bmatrix}-5 & 2 & 0 & -1 & -2 & 3 & -1 & 1\\
2 & 3 & -4 & 7 & 6 & 0 & 1 & 0
\end{bmatrix},
\end{align*}
where $\rho$ is the discounted rate. One can check that $\F(\E[N])>O$
in this example, so Theorems~\ref{thm:main} and (\ref{thm:stab})
can apply to this problem.

We set $\rho=0.25$ with which the problem is demonstrated numerically
to be well-posedness. To verify the stabilization, we conduct two
control policies: the zero control $u_{t}=0$ and the adaptive feedback
control $u_{t}=\Gamma(Q_{t})x_{t}$, and test three initial state
$x_{0}=[1,0]^{\tr}$, $[0,1]^{\tr}$, and $[1,1]^{\tr}$. Numerical
simulations (see Figure~\ref{fig:eg3}) show that the system is stable under
the adaptive feedback control but unstable without control (i.e.,
$u_{t}=0$). 

\begin{figure}[!tb]
\centering
\[
\begin{matrix}
\includegraphics[width=0.314\textwidth]{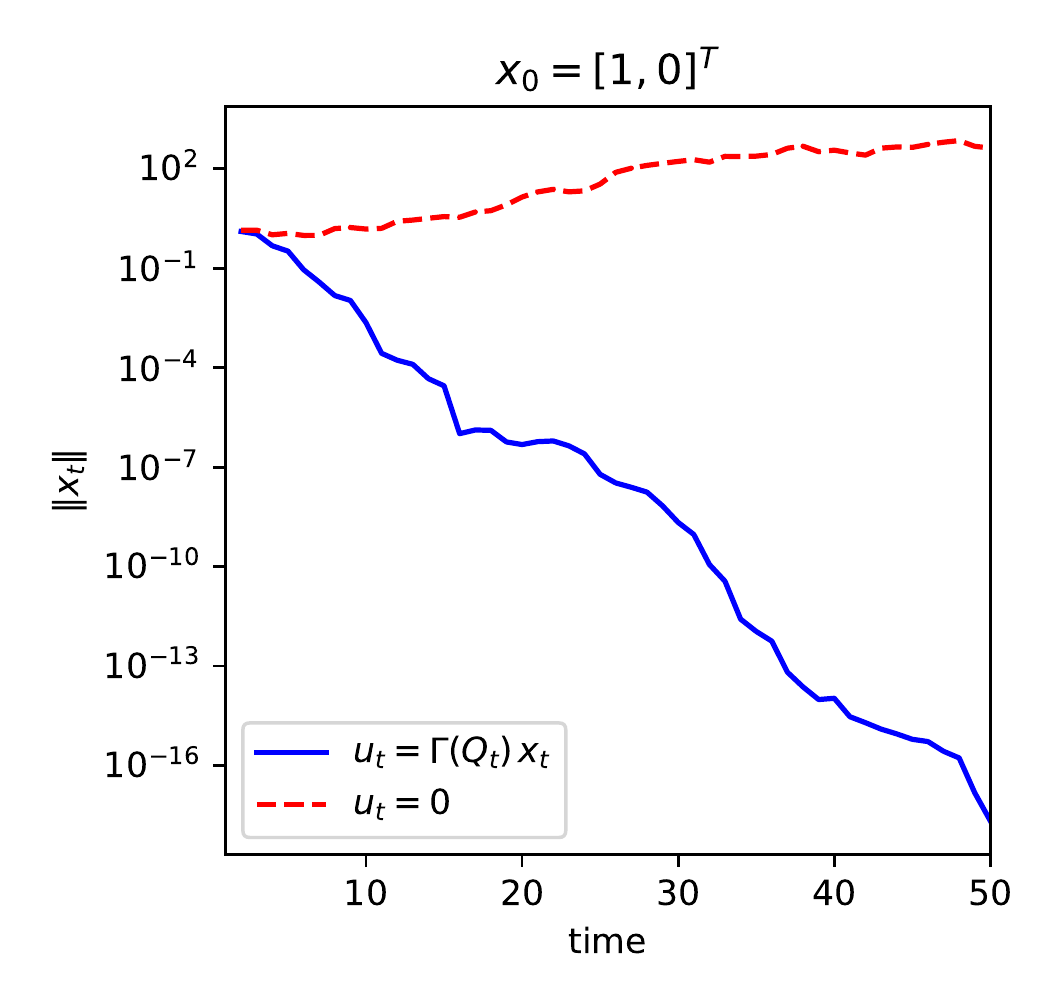} &
\includegraphics[width=0.3\textwidth]{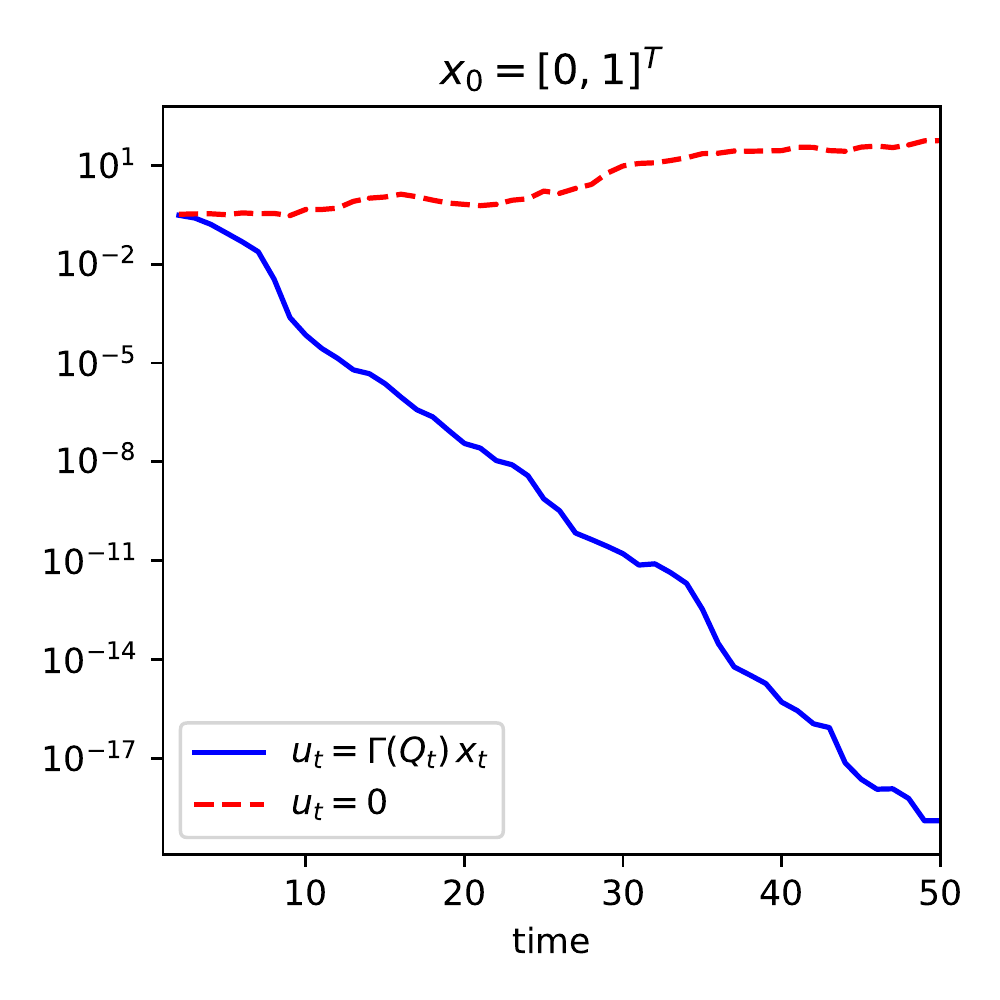} &
\includegraphics[width=0.3\textwidth]{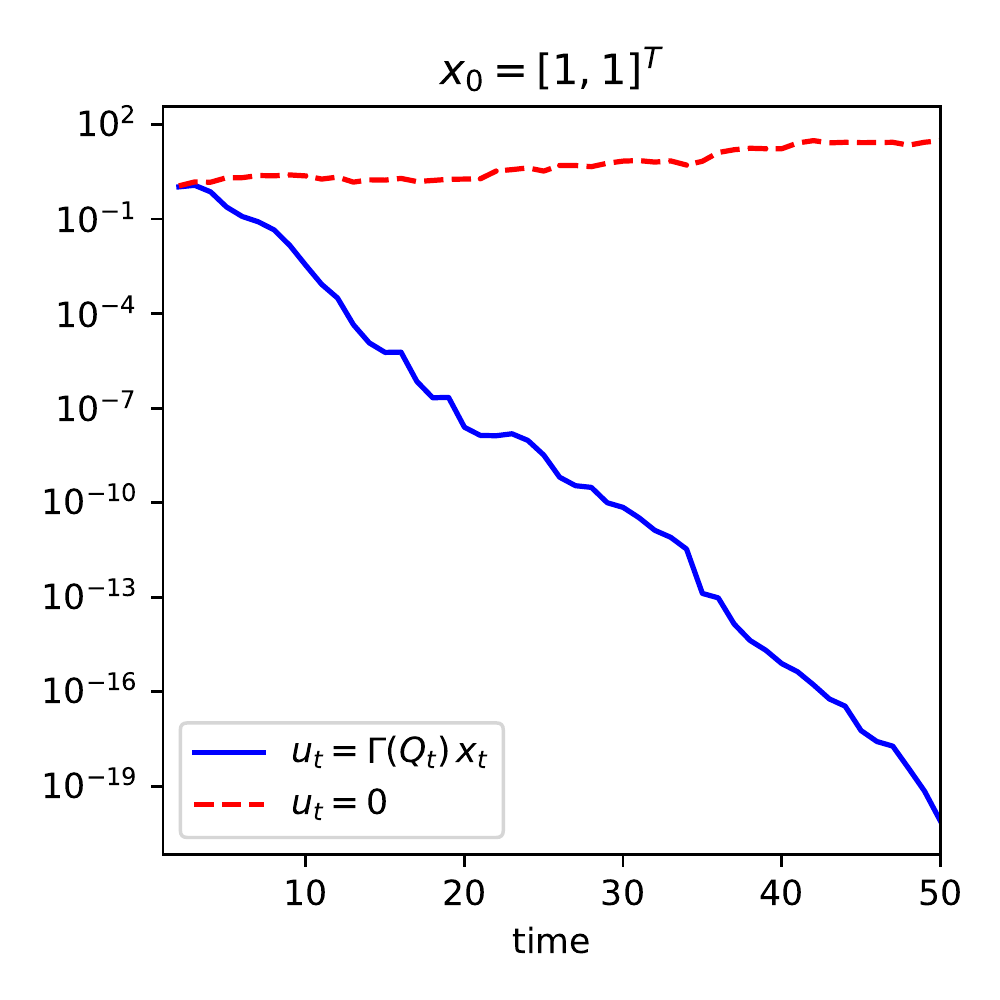}
\end{matrix}
\]
\caption{Comparison of the state processes under adaptive feedback control and zero control.}
\label{fig:eg3}
\end{figure}

\bibliographystyle{siam}
\bibliography{SLQR}

\end{document}